\newcommand{\ZZ}{\mathbb{Z}}
\newcommand{\s}{\mathcal{S}}
\newcommand{\Ef}{\widehat E_0}
\newcommand{\Et}{\widehat E_{\text{tight}}}
\newcommand{\Ct}{C^*_{\text{tight}}}
\newcommand{\Eu}{\widehat E_{\infty}}
\newcommand{\Gt}{\mathcal{G}_{\text{tight}}}
\newcommand{\G}{\mathcal{G}}
\newcommand{\EM}{\varnothing}
\newcommand{\J}{\mathcal{J}}
\newcommand{\I}{\mathcal{I}}
\newcommand{\Q}{\mathcal{Q}}
\theoremstyle{definition}
\newtheorem{theo}{Theorem}[section]
\newtheorem{ex}[theo]{Example}
\newtheorem{lem}[theo]{Lemma}
\newtheorem{defn}[theo]{Definition}
\newtheorem{prop}[theo]{Proposition}
\begin{document}
\date{}
\title{Boundary quotients of C*-algebras of right LCM semigroups}
\author{Charles Starling\thanks{Supported by CNPq (Brazil).}}

\maketitle

\begin{abstract}We study C*-algebras associated to right LCM semigroups, that is, semigroups which are left cancellative and for which any two principal right ideals are either disjoint or intersect in another principal right ideal. If $P$ is such a semigroup, its C*-algebra admits a natural boundary quotient $\mathcal{Q}(P)$. We show that $\mathcal{Q}(P)$ is isomorphic to the tight C*-algebra of a certain inverse semigroup associated to $P$, and thus is isomorphic to the C*-algebra of an \'etale groupoid. We use this to give conditions on $P$ which guarantee that $\mathcal{Q}(P)$ is simple and purely infinite, and give applications to self-similar groups and Zappa-Sz\'ep products of semigroups.
\end{abstract}

\tableofcontents
\section{Introduction}
A semigroup $P$ is left cancellative if $pq = ps$ implies that $q=s$, and C*-algebras associated to such semigroups are an active topic of research in operator algebras. Li's construction \cite{Li12} of a C*-algebra $C^*(P)$ from a left cancellative semigroup $P$ generalizes Nica's quasi-lattice ordered semigroups \cite{Ni92} and encompass a great deal of interesting C*-algebras, including the Cuntz algebras and the C*-algebra of the $ax+b$ semigroup, see \cite{Cu08}. Many semigroups of interest can be embedded into groups, and \cite{Li13} represents a comprehensive study of the C*-algebras of such semigroups. Another interesting class of semigroups (which has some overlap with the previous) are the {\em right LCM semigroups}, which are semigroups in which two principal right ideals are either disjoint or intersect in another principal right ideal. The paper \cite{BL14} considers the C*-algebras of such semigroups, and obtains many results about how the properties of $P$ influence the properties of $C^*(P)$ in this case.

We are concerned with boundary quotients of such algebras. Specifically, in \cite{BRRW14} the authors define a boundary quotient $\Q(P)$ of $C^*(P)$ \cite[Definition 5.1]{BRRW14} when $P$ is a right LCM semigroup, and this is the principal object of study in this paper. Quotients of this type are worthy of singling out because they frequently give examples of simple C*-algebra where the original would not (unless of course they are equal), and examples of simple C*-algebras are of interest to C*-algebra classification.


It turns out that this boundary quotient can be studied by using work of Exel \cite{Ex08} on inverse semigroup C*-algebras. An {\em inverse semigroup} is a semigroup $S$ such that for each $s\in S$ there is a unique $s^*\in S$ such that $ss^*s = s$ and $s^*ss^* = s^*$, and one can define a universal C*-algebra for representations of $S$, as in \cite{Pa99}. Further work of Norling \cite{No14} determined that for a left cancellative semigroup $P$, $C^*(P)$ is isomorphic to the universal C*-algebra of a certain inverse semigroup (denoted $\s$ in the sequel) obtained from $P$. In the paper \cite{Ex08} Exel discovered a natural quotient for Paterson's inverse semigroup algebra called the {\em tight C*-algebra} of an inverse semigroup. This C*-algebra is universal for so called {\em tight} representations of of the inverse semigroup; representations of this kind enforce a kind of nondegeneracy condition. This construction has been studied by many authors, see \cite{EGS12}, \cite{SM11}, \cite{EPSep14}, \cite{Ste14}. Our first main result, Theorem \ref{maintheorem}, states that the tight C*-algebra of $\s$ is isomorphic to the boundary quotient $\Q(P)$. This generalizes a combination of \cite[Corollary 6.4]{EP13} and \cite[Theorem 6.7]{BRRW14} from the case of self-similar groups, and in fact it was the desire to generalize this result to other types of semigroups which was the motivation for this work.

Both Paterson's and Exel's C*-algebras can be presented as the C*-algebras of certain \'etale groupoids, and so can be analyzed by using the many results concerning \'etale groupoids in the literature. There is a small difficulty in doing so however, because the groupoids which arise in this way can be non-Hausdorff, and a majority of the results in the literature about the structure of \'etale groupoid C*-algebras assumes the Hausdorff property. One condition which guarantees Hausdorff is right cancellativity (in addition to the left cancellativity already assumed), but one can weaken this a bit to obtain a condition on $P$ which is equivalent to the groupoid being Hausdorff, see Proposition \ref{hausdorff}. Here, we employ the results in \cite{EP14}, \cite{BCFS14}, and \cite{AD97} to find conditions on $P$ which guarantee that $\Q(P)$ is simple and purely infinite. Of specific use is \cite{EP14}, because that paper is concerned with \'etale groupoids arising from inverse semigroup actions. We note that in the recent work \cite{Ste14}, Steinberg independently comes to many of the same conclusions as \cite{EP14}, and many of the results we use from \cite{EP14} also appear in \cite{Ste14}, but throughout this article we will reference their appearance in \cite{EP14}.

This paper is organized as follows. Section \ref{background} recalls definitions and sets notation. In Section \ref{boundary}, we establish an isomorphism between $\Q(P)$ and the tight C*-algebra of an inverse semigroup, and in Section \ref{properties} we deduce properties of $\Q(P)$ by using this realization of it as a C*-algebra of an \'etale groupoid. In Section \ref{examplessection} we give some examples of right LCM semigroups from the literature, including some arising from Zappa-Sz\'ep products of semigroups and from self-similar groups. Finally, in Appendix \ref{coreappendix} we prove a small result about inverse semigroup actions which generalizes Proposition \ref{EPcore} and which may be of independent interest.

{\bf Acknowledgements}: We would like to thank Ruy Exel for many helpful and enlightening conversations about this work.

\section{Background}\label{background}

Let $P$ be a semigroup. We say that $P$ is {\em left cancellative} if $pq = ps$ implies that $q = s$ for all $p,q,$ and $s\in P$. A {\em right ideal} of $P$ is a set $X\subset P$ such that $XP = \{xp\mid x\in X, p\in P\}$ is a subset of $X$. If $p\in P$, then the set $pP = \{pq\mid q\in P\}$ is a right ideal, and any right ideal of this form is called a {\em principal right ideal}. An element of $pP$ is called a {\em right multiple} of $p$. All semigroups are assumed to be countable and discrete.

If $X$ is a right ideal of $P$, then for all $p\in P$ the sets
\[
pX = \{px\mid x\in X\}, \hspace{1cm}p^{-1}X = \{ y\in P\mid py\in X\}
\] 
are also right ideals. We let $\mathcal{J}(P)$ denote the smallest set of right ideals which contains $P$ and $\EM$, is closed under intersections, and such that $X\in \mathcal{J}(P)$ and $p\in P$ implies that both $pX$ and $p^{-1}X$ are in $\mathcal{J}(P)$. Then $\mathcal{J}(P)$ is a semilattice under intersection, and is called the semilattice of {\em constructible ideals}. For a left cancellative semigroup $P$, Li constructs a C*-algebra $C^*(P)$. 
\begin{defn}\label{LiDef}
Let $P$ be a left cancellative semigroup, and let $\J(P)$ denote the set of constructible ideals of $P$. Then $C^*(P)$ is defined to be the universal C*-algebra generated by a set of isometries $\{ v_p\mid p\in P\}$ and a set of projections $\{e_X\mid X\in \J(P)\}$
subject to the following:
\begin{enumerate}\addtolength{\itemsep}{-0.5\baselineskip}
\item[(L1)]$v_pv_q = v_{pq}$ for all $p, q\in P$, 
\item[(L2)]$v_pe_Xv^*_{p} = e_{pX}$ for all $p\in P$ and $X\in \J$,
\item[(L3)]$e_P = 1$ and $e_\EM = 0$, and
\item[(L4)]$e_Xe_Y = e_{X\cap Y}$ for all $X, Y\in \J$.
\end{enumerate}
\end{defn}

It is clear that $\mathcal{J}(P)$ contains every principal right ideal. In this paper we consider the following class of semigroups for which $\mathcal{J}(P)$ is equal to the set of principal right ideals. 
\begin{defn}
A semigroup $P$ is called a {\em right LCM semigroup} if it is left cancellative and the intersection of any two principal right ideals is either empty or another principal right ideal.
\end{defn}
Semigroups of this type have gone by other names in the literature. In \cite{La99}, Lawson considers the dual definition (ie, right cancellative semigroups such that two principal right ideals are either disjoint or intersect in another principal right ideal) and calls these {\em CRM monoids} (named for Clifford, Reilly and McAlister). In other works such as \cite{No14}, such semigroups are said to satisfy {\em Clifford's condition}.

Let $P$ be a right LCM semigroup with identity, and let $U(P)$ denote the invertible elements of $P$ (invertible elements of $P$ are also sometimes called the {\em units} of $P$). Then if we have $p, q\in P$ such that $pP\cap qP = rP$, we see that every element of $P$ which is right multiple of both $p$ and $q$ is also a right multiple of $r$, and we say that $r$ is a {\em right least common multiple} (or {\em right LCM}) of $p$ and $q$. If $rP = sP$, then a short calculation shows that there must exist $u\in U(P)$ such that $ru = s$. Hence, if $r$ is a right LCM of $p$ and $q$ then so is $ru$ for all $u\in U(P)$. Also, if $pP \cap qP = rP$, then there exist $p', q'\in P$ such that $pp' = qq' = r$. This right least common multiple property is the source of the terminology ``right LCM''.

Let $P$ be a right LCM semigroup and suppose that we have $p, q\in P$ such that $pP\cap qP = rP$ with $pp' = qq' = r$. Then it is straightforward to show that $p^{-1}qP = p'P$, and so the set of principal right ideals is in fact equal to the set of constructible ideals.

We shall be concerned with groupoids constructed from semigroups. Recall that a {\em groupoid} consists of a set $\G$ together with a subset $\G^{(2)} \subset \G \times \G$, called the set of composable pairs, a product map $\G^{(2)} \to \G$ with $(\gamma, \eta)\mapsto \gamma\eta$, and an inverse map from $\G$ to $\G$ with $\gamma \mapsto \gamma^{-1}$ such that
\begin{enumerate}\addtolength{\itemsep}{-0.5\baselineskip}
\item $(\gamma^{-1})^{-1} = \gamma$ for all $\gamma\in \G$,
\item If $(\gamma, \eta), (\eta, \nu)\in \G^{(2)}$, then $(\gamma\eta,\nu), (\gamma, \eta\nu)\in \G^{(2)}$ and $(\gamma\eta)\nu = \gamma(\eta\nu)$,
\item $(\gamma, \gamma^{-1}), (\gamma^{-1},\gamma)\in \G^{(2)}$, and $\gamma^{-1}\gamma\eta = \eta$, $\xi\gamma\gamma^{-1}$ for all $\eta, \xi$ with $(\gamma, \eta), (\eta,\xi) \in \G^{(2)}$.
\end{enumerate}
The set of {\em units} of $\G$ is the subset $\G^{(0)}$ of elements of the form $\gamma\gamma^{-1}$. The maps $r: \G\to \G^{(0)}$ and $d:\G\to \G^{(0)}$ given by
\[
r(\gamma) = \gamma\gamma^{-1}, \hspace{1cm} d(\gamma) = \gamma^{-1}\gamma
\]
are called the {\em range} and {\em source} maps respectively. It is straightforward to check that $(\gamma, \eta)\in \G^{(2)}$ is equivalent to
$r(\eta) = d(\gamma)$. 

One thinks of a groupoid $\G$ as a set of ``arrows'' between elements of $\G^{(0)}$. Given $x\in \G^{(0)}$, let
\[
\G^x := r^{-1}(x), \hspace{1cm} \G_x := d^{-1}(x), \hspace{1cm} \G^{x}_x := d^{-1}(x)\cap r^{-1}(x),
\] 
which are thought of, respectively, as the arrows ending at $x$, the arrows beginning at $x$, and all the arrows both ending and beginning at $x$. For all $x\in \G^{(0)}$, $\G^{x}_x$ is a group with identity $x$ when given the operations inherited from $\G$, and is called the {\em isotropy group} of $x$. The set Iso$(\G) = \cup_{x\in \G^{(0)}}\G_x^x$ is called the {\em isotropy group bundle} of $\G$. The {\em orbit} of $x\in \G^{(0)}$ is the set $\G(u):= r(\G_x) = s(\G^x)$.

A {\em topological groupoid} is a groupoid which is a topological space where the inverse and
product maps are continuous, where we are considering $\G^{(2)}$ with the product topology inherited from $\G\times\G$. Two topological groupoids are said to be {\em isomorphic} if there is a homeomorphism between them which preserves the inverse and product operations. A groupoid with topology $\G$ is called {\em \'etale} if it is locally compact, second countable, and the maps $r$ and $d$ are local homeomorphisms. These properties imply that $\G^{(0)}$ is open in $\G$ and that for all $x\in \G^{(0)}$ the spaces $\G^x$ and $\G_x$ are discrete.

For subsets $S, T\subset \G$, let $ST = \{\gamma\eta\mid \gamma\in S, \eta\in T, d(\gamma) = r(\eta)\}$. A subset $S\subset \G$ of a topological groupoid is called a {\em bisection} if the restrictions of $r$ and $d$ to $S$ are both injective. In an \'etale groupoid $\G$, the collection of open bisections forms a basis for the topology of $\G$, cf. \cite[Proposition 3.5]{Ex08}. If $S$ and $T$ are bisections in an \'etale groupoid, then so is $ST$.

A subset $U\subset \G^{(0)}$ is called {\em invariant} if for all $\gamma\in \G$, $r(\gamma)\in U$ implies that $s(\gamma)\in U$. A topological groupoid is called {\em minimal} if the only nonempty open invariant subset of $\G^{(0)}$ is $\G^{(0)}$. We say that $\G$ is {\em topologically principal} if the set of $x\in \G^{(0)}$ for which $\G^x_x = \{x\}$ is dense. We will say that $\G$ is {\em essentially principal} if the interior of Iso$(\G)$ is equal to $\G^{(0)}$, and we will say that $\G$ is {\em effective} if the interior of Iso$(\G)\setminus \G^{(0)}$ is empty. When $\G$ is a locally compact, second countable, Hausdorff, \'etale groupoid, then 
\[
\G \text{ topologically principal } \Leftrightarrow \G \text{ essentially principal } \Leftrightarrow \G \text{ effective},
\]
see \cite[Proposition 3.1]{R80} and \cite[Lemma 3.1]{BCFS14}.

In a construction from \cite{R80}, to an \'etale groupoid $\G$ one can associate C*-algebras $C^*(\G)$ and $C^*_r(\G)$, called the {\em C*-algebra of $\G$} and the {\em reduced C*-algebra of $\G$} respectively. To build these C*-algebras one starts with $C_c(\G)$, the continuous compactly supported functions on $\G$, which becomes a complex $*$-algebra when given the convolution product and involution given by
\[
f\star g(\gamma) = \sum_{\gamma_1\gamma_2 = \gamma}f(\gamma_1)g(\gamma_2)\hspace{1cm} f^*(\gamma) = \overline{f(\gamma^{-1})}.
\] 
We note that the fact that $\G$ is \'etale implies that the sum above is finite. In this work we are not concerned with the specifics, but both $C^*(\G)$ and $C^*_r(\G)$ are completions of $C_c(\G)$ in suitable norms, and $C^*_r(\G)$ is always a quotient of $C^*(\G)$. For more details, the interested reader is directed to \cite{R80}. In this paper we construct \'etale groupoids from certain semigroups, and most of our results concern the case when $C^*(\G) = C^*_r(\G)$. This happens for instance when $\G$ is {\em amenable}, see \cite{AR00}.

We will use the following result to deduce properties of the C*-algebras we construct.
\begin{theo}\cite[Theorem 5.1]{BCFS14}\label{groupoidsimple}
Let $\G$ be a second countable locally compact Hausdorff \'etale groupoid. Then $C^*(\G)$ is simple if and only if the following conditions are satisfied:
\begin{enumerate}\addtolength{\itemsep}{-0.5\baselineskip}
\item $C^*(\G) = C^*_r(\G)$,
\item $\G$ is topologically principal, and
\item $\G$ is minimal. 
\end{enumerate}
\end{theo}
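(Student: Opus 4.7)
The plan is to prove the two directions separately, with the forward direction being the more delicate one.

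For the reverse direction (conditions 1--3 imply simplicity), I would run the standard ``uniqueness theorem'' argument. Suppose $I \triangleleft C^*(\G) = C^*_r(\G)$ is a nonzero ideal. Using the canonical faithful conditional expectation $E : C^*_r(\G) \to C_0(\G^{(0)})$ given by restriction of functions in $C_c(\G)$ to $\G^{(0)}$, the first step is to show that $E$ is faithful on positive elements of $I$: for any nonzero positive $a \in I$, there is a point $x \in \G^{(0)}$ at which $E(a)(x) \neq 0$. This is where topological principality is essential --- given a nonzero positive $a$, one picks a bisection in the support and uses the density of points with trivial isotropy to cut $a$ down to something whose $E$-value is bounded below, mimicking the argument in \cite[Proposition 3.1]{R80}. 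Thus $I \cap C_0(\G^{(0)})$ is a nonzero ideal of $C_0(\G^{(0)})$, corresponding to a nonempty open $U \subset \G^{(0)}$. Next, I would show $U$ is invariant by checking that for any open bisection $V$ with characteristic-like element $f_V \in C_c(\G)$, conjugating $C_0(U)$ by $f_V$ stays inside $I \cap C_0(\G^{(0)})$, so $r(V \cap d^{-1}(U)) \subset U$. By minimality, $U = \G^{(0)}$, so $I \supset C_0(\G^{(0)})$ and hence, via approximate units supported on $\G^{(0)}$, $I$ contains all of $C^*_r(\G)$.

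For the forward direction, assume $C^*(\G)$ is simple. Condition (1) is immediate: the canonical surjection $C^*(\G) \to C^*_r(\G)$ has proper kernel or is an isomorphism, and simplicity forces the kernel to be zero. For (3), if $U \subsetneq \G^{(0)}$ is a nonempty open invariant subset, then the restriction groupoid $\G|_U$ is an open subgroupoid, $C^*(\G|_U)$ embeds as a (two-sided) ideal of $C^*(\G)$ by extension by zero, and this ideal is proper and nonzero, contradicting simplicity. For (2), suppose $\G$ is not topologically principal; then by the equivalences cited after the definitions, $\G$ is not effective, so there exists a nonempty open bisection $W \subset \mathrm{Iso}(\G) \setminus \G^{(0)}$. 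Using $W$ one builds a nonzero element $f \in C_c(\G)$ supported on $W$ which is \emph{not} in $C_0(\G^{(0)})$ but whose image in $C^*_r(\G)$ ``looks like'' an element of the diagonal after twisting; more concretely, one exhibits a homomorphism from $C^*(\G)$ onto $C^*(\G') \neq C^*(\G)$ for a suitable quotient groupoid, yielding a nontrivial ideal.

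The hard part will be the forward direction (2). Constructing a proper ideal from the failure of topological principality is subtle because the obstruction lives in the isotropy, and one must carefully produce an element detecting it --- typically via either an induced representation from an isotropy subgroup that is not weakly contained in the regular representation of $\G$, or by exploiting that the non-Hausdorff-style ambiguity forces the quotient by the ``diagonal'' action to collapse. The reverse direction is comparatively routine once one has the faithfulness of $E$ on $I$, which itself is the technical heart of the matter in the Hausdorff étale setting.
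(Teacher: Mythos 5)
First, note that the paper offers no proof of this statement at all: it is quoted verbatim from \cite{BCFS14} (Theorem 5.1 there) and used as a black box, so there is no internal argument to compare yours against; the only question is whether your sketch would stand on its own. Your reverse direction is the standard uniqueness-theorem argument and is sound in outline: for a nonzero ideal $I\triangleleft C^*_r(\G)$, topological principality is used to show $I\cap C_0(\G^{(0)})\neq 0$, the corresponding open set is invariant, minimality forces it to be $\G^{(0)}$, and an approximate-unit argument finishes. Your forward arguments for (1) and (3) are also fine, provided in (3) you justify properness of the ideal generated by $C_c(d^{-1}(U))$ by surjecting the quotient onto $C^*(\G|_{\G^{(0)}\setminus U})$, which is essentially what you indicate.

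The genuine gap is the forward direction of (2), which you acknowledge as ``the hard part'' and then do not prove. The one concrete mechanism you propose --- a surjection $C^*(\G)\to C^*(\G')$ for a ``suitable quotient groupoid'' obtained by collapsing isotropy --- is not available in general: quotienting an \'etale groupoid by (the interior of) its isotropy bundle does not produce an \'etale groupoid unless that bundle has special structure (e.g.\ is open, closed and normal in an appropriate sense), and even when it does, one must still verify that the induced $*$-homomorphism exists, is surjective, and has nonzero proper kernel; none of this is automatic. The alternative you mention (induced representations from an isotropy group not weakly contained in the regular representation) likewise requires a real construction and hypotheses you have not supplied. In \cite{BCFS14} precisely this implication --- simplicity forces effectiveness/topological principality --- is the substantive content of the theorem: starting from a nonempty open bisection $W\subset \mathrm{Iso}(\G)\setminus\G^{(0)}$ one must explicitly manufacture a nonzero proper ideal (equivalently, two representations of $C^*(\G)$ with distinct kernels), and this construction is where the work lies. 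As written, your proposal establishes the easy implications and the ``if'' direction in outline, but leaves the key ``only if'' step unproved, so it is not yet a proof of the theorem.
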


An \'etale groupoid is called {\em locally contracting} if for every nonempty open subset $U\subset \G^{(0)}$, there exists an open subset $V\subset U$ and an open bisection $S\subset \G$ such that $\overline{V}\subset S^{-1}S$ and $SVS^{-1}\subsetneq V$. By \cite[Proposition 2.4]{AD97}, if $C^*_r(\G)$ is simple and $\G$ is locally contracting, then $C^*_r(\G)$ is purely infinite. We assume knowledge of C*-algebras, but for the unfamiliar an excellent reference for the undefined terms above is \cite{Dav}.

\section{The boundary $\Q(P)$ as the tight C*-algebra of an inverse semigroup}\label{boundary}

Recall that a semigroup $S$ is called {\em regular} if for all $s\in S$ there exists an element $t\in S$ such that $tst = t$ and $sts = s$. Such an element $t$ is often called an {\em inverse} of $s$, though even if $S$ has an identity we need not have $ts = 1$. However, we always have $(ts)^2 =tsts = ts$, that is to say that $ts$ is idempotent. We let $E(S) = \{ e\in S\mid e^2=e\}$ denote the set of all idempotent elements of $S$. A regular semigroup is called an {\em inverse semigroup} if each element $s$ has a unique inverse, denoted $s^*$. It is a fact that a regular semigroup is an inverse semigroup if and only if elements of $E(S)$ commute, and we note that in this case $E(S)$ is closed under multiplication.  

\begin{ex}\label{PCM}
We give an important and fundamental example of an inverse semigroup. Let $X$ be a set. Consider
\[
\I(X) = \{f:U\to V\mid U, V\subset X, f\text{ is bijective}\}.
\]
Then $\I(X)$ is an inverse semigroup when given the operation of function composition on the largest domain possible. The inverse of an element $f: U\to V$ is the inverse function $f^* = f^{-1}:V\to U$. One sees that the identity function is the identity for this inverse semigroup, and more generally every idempotent is the identity on some subset. If we have $f, g\in \I(X)$ such that the range of $f$ does not intersect the domain of $g$, then the composition $g\circ f$ on the largest domain possible is equal to the empty function, which acts as a zero element in $\I(X)$. It is an important fact in semigroup theory that every inverse semigroup can be embedded into $\I(X)$ for some set $X$ -- this is known as the Wagner-Preston theorem. 

This example demonstrates that many inverse semigroups naturally contain a zero element. Because of this, the two algebraic objects we consider in this paper, namely ``right LCM semigroups'' and ``inverse semigroups'' should be thought of as quite different types of objects, as left cancellativity in a right LCM semigroup eliminates the possibility of a zero element in nontrivial cases.
\end{ex}

Now, given a right LCM semigroup $P$ we will construct an inverse semigroup $\s$. We define an equivalence relation $\sim$ on $P\times P$ by saying that $(p, q)\sim (a, b)$ if and only if there exists $u\in U(P)$ such that $au = p$ and $bu = q$. In other words, the equivalence class of $(p, q)$ consists of all elements of the form $(pu, qu)$ with $u\in U(P)$. Denote by $[p, q]$ the equivalence class of $(p, q)$. The following proposition is essentially \cite[Theorem 3]{La99}, though Lawson presents the dual construction for right cancellative left LCM semigroups.
\begin{prop}
Let $P$ be a right LCM semigroup with identity $1_P$, and let
\begin{equation}\label{Sdef}
\s := \{ [p,	q]\mid p, q\in P\}\cup \{0\}.
\end{equation}  Then $\s$ becomes an inverse semigroup with identity $1_\s = [1_P, 1_P]$ when given the operation
\[
[a,b][c,d] = \begin{cases}[ab', dc'] & \text{if } cP\cap bP = rP \text{ and } cc' = bb' = r \\ 0 & \text{if }cP\cap bP = \EM\end{cases}
\]
and $s0 = 0s = 0$ for all $s\in \s$. In this case, we have that $[a,b]^* = [b,a]$ and 
\[
E(\s) = \{[a, a]\mid a\in P\}\cup \{0\}.
\]
\end{prop}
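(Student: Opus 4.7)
The plan is to realize the operation concretely inside the symmetric inverse monoid $\I(P)$ from Example \ref{PCM}, thereby transporting associativity, well-definedness, and the inverse semigroup axioms essentially for free. For each pair $(p,q) \in P \times P$ define $\phi_{p,q} \colon qP \to pP$ by $\phi_{p,q}(qx) = px$; left cancellativity makes this a well-defined bijection, hence an element of $\I(P)$. One then checks that $(p,q) \sim (p',q')$ is equivalent to $\phi_{p,q} = \phi_{p',q'}$: one direction is immediate because $quP = qP$ for any unit $u$, and for the other one deduces from equality of domains $qP = q'P$ that $q' = qu$ with $u \in U(P)$, then evaluates both functions at $q'$ to recover $p' = pu$. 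Sending $0 \in \s$ to the empty partial function, this produces an injection $\Phi \colon \s \to \I(P)$.

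Next I would compute the composition $\phi_{a,b} \circ \phi_{c,d}$ in $\I(P)$. If $bP \cap cP = \EM$ the composition is the empty function, matching the rule $[a,b][c,d] = 0$. Otherwise write $bP \cap cP = rP$ with $bb' = cc' = r$; applying left cancellation to $bb'y = cc'y = cx$ shows $cx \in bP$ iff $x \in c'P$, so the domain of the composite is $dc'P$, on which $dc'y \mapsto cc'y = bb'y \mapsto ab'y$. Hence $\phi_{a,b} \circ \phi_{c,d} = \phi_{ab',\, dc'}$, which simultaneously verifies that the proposed formula is independent of the chosen representatives and of the choice of right LCM $r$, and shows that the image of $\Phi$ is closed under composition in $\I(P)$. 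Associativity of the multiplication on $\s$ is then automatic, and $[1_P,1_P] \mapsto \mathrm{id}_P$ is the identity.

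For the inverse semigroup structure, observe $\phi_{b,a} = \phi_{a,b}^{-1}$, so $[b,a]$ plays the role of an inverse of $[a,b]$, giving $[a,b]^* = [b,a]$. The element $\phi_{a,a} = \mathrm{id}_{aP}$ is idempotent in $\I(P)$, so every $[a,a]$ lies in $E(\s)$; conversely, if $[p,q]$ is idempotent then $\phi_{p,q}$ is an idempotent partial bijection, forcing it to be the identity on its domain and hence $p = q$. Finally, any two such idempotents commute in $\I(P)$, since $\mathrm{id}_{aP} \circ \mathrm{id}_{bP} = \mathrm{id}_{aP \cap bP}$, and when $aP \cap bP = rP$ this equals $\phi_{r,r}$, so $[a,a][b,b] = [r,r] = [b,b][a,a]$. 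As commuting idempotents characterize inverse semigroups among regular ones, this completes the argument.

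The principal technical step, and the one to execute most carefully, is the composition computation in the second paragraph; once the right LCM identity $cx \in bP \Leftrightarrow x \in c'P$ is verified via left cancellation, well-definedness, associativity, and every remaining axiom are inherited from $\I(P)$, and this viewpoint makes transparent why the product takes the stated form. This is essentially the dual of Lawson's \cite[Theorem 3]{La99}, but the partial-bijection presentation seems the cleanest way to bypass the bookkeeping that a direct computation would otherwise require.
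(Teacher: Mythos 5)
Your proof is correct, and it takes a genuinely different route from the paper's. The paper verifies everything by hand on representatives: a long direct computation (the chain of elements $r_1,\dots,r_4,\,p,q,k_1,k_2$) establishes well-definedness and associativity, and separate short arguments identify the idempotents, check they commute, and exhibit $[q,p]$ as an inverse. You instead embed $\s$ into $\I(P)$ via $[p,q]\mapsto\phi_{p,q}=\lambda_p\lambda_q^{-1}$ and transport the structure: the only substantive computation is the domain identification $cx\in bP\Leftrightarrow x\in c'P$ (correctly reduced to left cancellation and $bP\cap cP=rP$), after which well-definedness, independence of the choice of right LCM, associativity, the identity, inverses, and commuting idempotents all come for free from the symmetric inverse monoid; your injectivity argument correctly uses the identity of $P$ and the fact that $rP=sP$ forces $s=ru$ with $u\in U(P)$, both available in the paper's background. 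A bonus of your route is that it simultaneously proves the remark the paper states without proof immediately after the proposition, namely that $[p,q]\mapsto\lambda_p\lambda_q^{-1}$ identifies $\s$ with the left inverse hull $\I_l(P)$ (your closure-under-composition computation is exactly what is needed there). What the paper's approach buys, as the author says explicitly, is a hands-on familiarity with right LCM calculations and a proof that never leaves $P$ itself; what yours buys is brevity and a conceptual explanation of why the product formula looks the way it does, at the cost of routing the argument through the Wagner--Preston-style realization. Note also that your observation that idempotent partial bijections are identity maps on their domains, plus evaluation at $q$, cleanly recovers $p=q$, matching the paper's conclusion $E(\s)=\{[a,a]\mid a\in P\}\cup\{0\}$.
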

\begin{proof}
Before we start, we note that although this proof is straightforward it is long and tedious. However, it may be valuable if one wishes to get a feel for right LCM semigroups.

We first show that the multiplication above is well-defined. Suppose that $[a,b],[c,d]\in \s$. Then if $u, v\in U(P)$, we know that $buP = bP$ and $cvP = cP$, and so $[a,b][c,d] \neq 0$ if and only if $[au, bu][cv, dv]\neq 0$. So, suppose that $[a,b][c,d] = [ab',dc']$, where $bP\cap cP = rP$ with $bb' = cc' = r$. Then $buP\cap cvP = bP\cap cP = rP$, and so there exist $b'', c''\in P$ such that $bub'' = cvc'' = r$. Because $bb' = cc' = r$ and $P$ is left cancellative, we have that $ub'' = b'$ and $vc'' = c'$. Hence
\[
[au,bu][cv,dv] = [aub'', dvc''] = [ab', dc'] = [a,b][c,d]
\]
and so the multiplication is well-defined.

We now show that the given multiplication is associative. Take $[a,b], [c,d], [e,f]\in \s$ and first suppose that $[a,b]\Big([c,d][e,f]\Big)\neq 0$. Then there must be $r_1, d_1, e_1\in P$ such that $dP\cap eP = r_1P$, $dd_1 = ee_1 = r_1$, and $[c,d][e,f] = [cd_1, fe_1]$. Since we assumed that $[a,b][cd_1, fe_1] \neq 0$, we now must have that there exist $r_2, b_1, c_1\in P$ such that $bP\cap cd_1P = r_2P$, $bb_1 = cd_1c_1 = r_2$, and 
\[
[a,b]\Big([c,d][e,f]\Big) = [ab_1, fe_1c_1] \neq 0.
\]
Since $bP\cap cd_1P\neq \EM$, we must have that $bP\cap cP\neq \EM$, so there exists $r_3, b_2, c_2\in P$ such that $bP \cap cP = r_3P$, $bb_2 = cc_2 = r_3$, and $[a, b][c,d] = [ab_2, dc_2]$. In addition, we have that $r_2P\subset r_3P$, and so there exists $q\in P$ such that $r_2 = r_3q$. Now we have that
\begin{equation}\label{d1c1c2q}
cd_1c_1 = r_2 = r_3q = cc_2q \hspace{0.5cm} \Rightarrow \hspace{0.5cm} d_1c_1 = c_2q
\end{equation}
and so we have
\[
ee_1c_1 = dd_1c_1 = dc_2q \hspace{0.5cm} \Rightarrow \hspace{0.5cm} dc_2P\cap eP \neq \EM.
\]
Hence $\Big([a,b][c,d]\Big)[e,f]=[ab_2, dc_2][e,f]\neq 0$. Furthermore, there exist $r_4, d_2, e_2\in P$ such that $dc_2P\cap eP = r_4P$, $dc_2d_2 = ee_2 = r_4$, and 
\[
\Big([a,b][c,d]\Big)[e,f] = [ab_2, dc_2][e,f] = [ab_2d_2, fe_2].
\]
In this case, we have that $r_4P\subset r_1P$, and so there exists $p\in P$ such that $r_4 = r_1p$. Also, similar to \eqref{d1c1c2q}, we have that $c_2d_2 = d_1p$. If instead we started by insisting that $\Big([a,b][c,d]\Big)[e,f]\neq 0$, then a similar argument gives that $[a,b]\Big([c,d][e,f]\Big) \neq 0$. Thus to show associativity we can assume both products are nonzero and that we have elements $b_1, b_2, c_1, c_2, d_1, d_2, e_1, e_2, r_1, r_2, r_3, r_4, q, p\in P$ such that
\[
\begin{array}{l}
dP\cap eP = r_1P\\
bP \cap cd_1P = r_2P\\
bP \cap cP = r_3P\\
dc_2P\cap eP = r_4P
\end{array}
\hspace{1cm}
\begin{array}{l}
dd_1 = ee_1 = r_1\\
bb_1 = cd_1c_1 = r_2\\
bb_2 = cc_2 = r_3\\
dc_2d_2 = ee_2 = r_4
\end{array}
\hspace{1cm}
\begin{array}{l}
r_2 = r_3q\\
r_4 = r_1p\\
d_1c_1 = c_2q\\
c_2d_2 = d_1p
\end{array}
\]
Now, we have that $r_1c_1 = ee_1c_1 = dd_1c_1 = dc_2q$, and so $r_1c_1\in dc_2P\cap eP = r_4P$, meaning that there exists $k_1\in P$ such that $r_1c_1 = r_4k_1 = r_1pk_1$, and because $P$ is left cancellative we have that $pk_1 = c_1$.

Similarly, $r_3d_2 = bb_2d_2 = cc_2d_2 = cc_1d_1$, and so $r_3d_2\in bP\cap cd_1P = r_2P$. This means that there exists $k_2\in P$ such that $r_3d_2 = r_2k_2 = r_3qk_2$, and since $P$ is left cancellative we have that $d_2 = qk_2$.

We claim that $k_1k_2 = 1 = k_2k_1 = 1_P$, and hence $k_1, k_2\in U(P)$. We calculate
\[
d_1pk_1k_2 = d_1c_1k_2 = c_2qk_2 = c_2d_2 = d_1p \hspace{0.5cm} \Rightarrow \hspace{0.5cm} k_1k_2 = 1_P,
\]
\[
c_2qk_2k_1 = c_2d_2k_1 = d_1pk_1 = d_1c_1 = c_2q \hspace{0.5cm} \Rightarrow \hspace{0.5cm} k_2k_1 = 1_P.
\]
Now, $bb_1 = r_2 = r_3q = bb_2q$, and so $b_1 = b_2q$. Similarly, $e_2 = e_1p$. Thus,
\[
[a,b]\Big([c,d][e,f]\Big) = [ab_1, fe_1c_1] = [ab_2q, fe_1c_1],
\]
\[
\Big([a,b][c,d]\Big)[e,f] = [ab_2d_2, fe_2] =[ab_2d_2, fe_1p],
\]
and 
\[
ab_2d_2k_1 = ab_2qk_2k_1 = ab_2q, \hspace{0.5cm}fe_1pk_1 = fe_1c_1.
\]
Hence $[a,b]\Big([c,d][e,f]\Big) = \Big([a,b][c,d]\Big)[e,f]$ as required.

Suppose now that $[p,q][p,q] = [p,q]$. Then $pP\cap qP = rP$ and there exist $p', q'$ such that $pp'= qq' = r$, and $[p,q]= [p,q][p,q] = [pp',qq'] = [r,r]$. Hence there exists $u\in U(P)$ such that $p= ru = q$. Hence the only idempotent elements of $\s$ are elements of the form $[p,p]$, together with the 0 element. Now suppose that we have $p, q\in P$ such that $[p, p][q,q] \neq 0$. Then $pP\cap qP = rP$ for some $r\in P$ and there exist $p', q'$ such that $pp'=qq' = r$, and $[p,p][q,q] = [pp',qq'] = [r,r]$. It is clear that this is equal to $[q,q][p,p]$, and that $[p, p][q,q]=0$ if and only if $[q,q][p,p] = 0$. Hence the idempotents of $\s$ commute.

It is obvious that $[p,q][q,p][p,q] = [p,q]$ and $[q,p][p,q][q,p] = [q,p]$. Hence each element of $\s$ has an inverse (0 is the inverse of 0), and so $\s$ is regular. As above, the idempotents of $\s$ commute, hence $\s$ is an inverse semigroup.
\end{proof}

There is another formulation of the semigroup $\s$ above, considered for example in \cite{No14}. Consider $\I(P)$ as in Example \ref{PCM}. Since $P$ is assumed to be left cancellative, the map $\lambda_p:P\to pP$ defined by $\lambda_p(q) = pq$ is a bijection, and hence an element of $\I(P)$. Let $\I_l(P)$ denote the inverse semigroup generated by the elements $\{\lambda_p\}_{p\in P}$ inside $\I(P)$. This is sometimes called the {\em left inverse hull} of $P$. Then the map from $\s$ to $\I_l(P)$ given by $[p,q]\mapsto \lambda_p\lambda_q^{-1}$ is an isomorphism.

The main result of this section is an isomorphism between two C*-algebras, denoted in the sequel $\Q(P)$ and $C^*_{\text{tight}}(\s)$. We begin by defining $\Q(P)$. A finite set $F\subset P$ is called a {\em foundation set} if for all $p\in P$ there exists $f\in F$ such that $fP\cap pP \neq \EM$. The following definition is \cite[Definition 5.1]{BRRW14}.

\begin{defn}Let $P$ be a right LCM semigroup. The {\em boundary quotient} of $C^*(P)$, denoted $\Q(P)$ is the universal C*-algebra generated by sets  $\{ v_p\mid p\in P\}$ and  $\{e_X\mid X\in \J\}$ subject to the relations (L1)--(L4) in Definition \ref{LiDef} and 
\[
\prod_{p\in F} (1-e_{pP}) = 0\text{ for all foundation sets }F\subset P.
\]
\end{defn}

Now that we have defined $\Q(P)$, we define the second algebra which concerns us. Let $A$ be a C*-algebra and let $S$ be an inverse semigroup with zero. Then a {\em representation} of $S$ is a map $\pi: S\to A$ such that for all $s, t\in S$ we have $\pi(st) = \pi(s)\pi(t)$, $\pi(s^*) = \pi(s)^*$, and $\pi(0) = 0$. The {\em universal C*-algebra of $S$}, considered in \cite{Pa99} and denoted $C^*_u(S)$, is the universal C*-algebra generated by one element for each element of $S$ such that the standard map $\pi_u: S\to C^*_u(S)$ is a representation. Note that this implies that $\pi_u(s)$ is a partial isometry for each $s\in S$. 

 Let $S$ be an inverse semigroup, let $\pi: S\to A$ be a representation, and let $D_\pi$ denote the C*-subalgebra of $A$ generated by $\pi(E(S))$. Since $E(S)$ is commutative, $D_\pi$ must be a commutative C*-algebra. The set 
\[
\mathscr{B}_\pi = \{e\in D_\pi\mid e^2 = e\}
\]
is a Boolean algebra when given the operations
\[
e \wedge f = ef \hspace{0.5cm} e\vee f = e+f - ef\hspace{0.5cm} \neg e = 1-e
\]

We will recall a subclass of representations defined in \cite{Ex08}. Let $S$ be an inverse semigroup and let $F\subset Z\subset E(S)$. We say that $F$ is a {\em cover} of $Z$ if for every nonzero $z\in Z$ there is $f\in Z$ such that $fz \neq 0$. If $x\in E(S)$ and $F$ is a cover for $\{y\in E(S)\mid yx = x\}$, then we say that $F$ is a cover of $x$. For finite sets $X, Y\subset E(S)$, let
\[
E(S)^{X, Y} = \{e\in E(S)\mid ex = e \text{ for all } x\in X\text{ and }ey = 0\text{ for all } y\in Y\}
\]
A representation $\pi: S\to A$ is called {\em tight} if for every pair of finite sets $X, Y\subset E(S)$ and every finite cover $Z$ of $E(S)^{X,Y}$, we have
\[
\bigvee_{z\in Z}\pi(z) = \prod_{x\in X}\pi(x)\prod_{y\in Y}(1-\pi(y)).
\]
The {\em tight C*-algebra of $S$}, denoted $C^*_{\text{tight}}(S)$,  is the universal C*-algebra generated by one element for each element of $S$ subject to the relations saying that the standard map $\pi_t: S\to C^*_{\text{tight}}(S)$ is a tight representation.

\begin{lem}\label{starnotcalc}Let $P$ be a right LCM semigroup, and let $\{ v_p\mid p\in P\}$ and  $\{e_X\mid X\in \J(P)\}$ be as in Definition \ref{LiDef}. Then for all $p,q\in P$ we have
\[
v_p^*v_q = \begin{cases}v_{p'}v_{q'}^*&\text{if }pP\cap qP = rP\text{ and }r = pp' = qq'\\0&\text{if }pP\cap qP = \EM.\end{cases}.
\]
\end{lem}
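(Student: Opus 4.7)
The plan is to use only the defining relations (L1)--(L4) together with the fact that each $v_p$ is an isometry (so $v_p^*v_p = 1$) and that $v_pv_p^* = e_{pP}$ (this is (L2) applied to $X=P$, combined with (L3) which says $e_P=1$).

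First I would rewrite $v_p^*v_q$ by inserting the projections associated to $p$ and $q$ on either side:
\[
v_p^*v_q \;=\; v_p^*(v_pv_p^*)(v_qv_q^*)v_q \;=\; v_p^*\,e_{pP}\,e_{qP}\,v_q \;=\; v_p^*\,e_{pP\cap qP}\,v_q,
\]
where the last equality uses (L4). This single identity already handles both cases. If $pP\cap qP = \EM$, then (L3) gives $e_\EM = 0$, so $v_p^*v_q = 0$.

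For the other case, suppose $pP\cap qP = rP$ with $pp' = qq' = r$. Then by (L1) and (L2),
\[
e_{rP} \;=\; e_{pp'P} \;=\; v_{pp'}e_Pv_{pp'}^* \;=\; v_pv_{p'}v_{p'}^*v_p^*.
\]
Substituting and using $v_p^*v_p = 1$ yields
\[
v_p^*v_q \;=\; v_p^*v_pv_{p'}v_{p'}^*v_p^*v_q \;=\; v_{p'}v_{p'}^*v_p^*v_q.
\]
The final step is to replace $v_{p'}^*v_p^*$ by $v_{q'}^*v_q^*$: since $pp' = qq'$, relation (L1) gives $v_pv_{p'} = v_{pp'} = v_{qq'} = v_qv_{q'}$, and taking adjoints gives $v_{p'}^*v_p^* = v_{q'}^*v_q^*$. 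Hence
\[
v_{p'}v_{p'}^*v_p^*v_q \;=\; v_{p'}v_{q'}^*v_q^*v_q \;=\; v_{p'}v_{q'}^*,
\]
which is what we wanted.

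The proof is entirely routine manipulation of the four Li relations; there is no real obstacle. The only thing to be mindful of is which identity to use at each step --- in particular, noticing that the right least common multiple $r$ of $p$ and $q$ can be expanded through $p$ (as $pp'$) \emph{or} through $q$ (as $qq'$), and that the symmetry between these two expansions is exactly what forces the formula $v_{p'}v_{q'}^*$ to come out in the desired form.
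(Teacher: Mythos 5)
Your argument is correct and follows essentially the same route as the paper's proof: insert the range projections $e_{pP}=v_pv_p^*$ and $e_{qP}=v_qv_q^*$, use (L4) to reduce to $e_{pP\cap qP}$ (which handles the empty case via (L3)), expand $e_{rP}$ through $r=pp'=qq'$ using (L1)--(L2), and cancel with the isometry relation. The only cosmetic difference is that the paper writes $e_{rP}=v_rv_r^*$ and expands $v_r$ two ways at once, while you expand through $p$ and then switch to $q$ by taking adjoints of $v_pv_{p'}=v_qv_{q'}$.
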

\begin{proof}
Suppose that $pP\cap qP = rP\text{ and }r = pp' = qq'$. Then 
\begin{eqnarray*}
v_p^*v_q & =& (v_p^*e_{pP})(e_{qP}v_q) = v_p^*e_{rP}v_q\\
&=& v_p^*v_rv_r^*v_q = v_p^*(v_pv_{p'})(v_qv_{q'})^*v_q = (v_p^*v_p)v_{p'}v_{q'}^*(v_{q'}^*v_q) =  v_{p'}v_{q'}^*. 
\end{eqnarray*}
The second equality above shows that $v_p^*v_q = 0$ if $pP\cap qP = \EM$. 
\end{proof}

\begin{lem}\label{L1}
Let $P$ be a right LCM semigroup with identity, and let $\s$ be as in \eqref{Sdef}. Then the map $\pi:\s \to \Q(P)$ defined by
\[
\pi([p,q]) = v_pv_q^*
\]
\[
\pi(0) = 0
\]
is a tight representation of $\s$.
\end{lem}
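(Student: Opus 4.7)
The plan has three tasks: well-definedness, the semigroup-$*$-homomorphism properties, and tightness. For well-definedness, if $(p,q)\sim(pu,qu)$ with $u\in U(P)$, then in $\Q(P)$ the isometry $v_u$ is unitary (as $v_uv_{u^{-1}}=v_1=1=v_{u^{-1}}v_u$), so $v_{pu}v_{qu}^*=v_p(v_uv_u^*)v_q^*=v_pv_q^*$. The identities $\pi(0)=0$ and $\pi([p,q]^*)=\pi([p,q])^*$ are immediate, and multiplicativity follows from Lemma \ref{starnotcalc}: when $bP\cap cP=rP$ with $bb'=cc'=r$, we obtain $v_b^*v_c=v_{b'}v_{c'}^*$, giving $\pi([a,b])\pi([c,d])=v_av_{b'}v_{c'}^*v_d^*=v_{ab'}v_{dc'}^*=\pi([ab',dc'])$, and the zero case is similar.

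The substantive content is tightness. Given finite $X,Y\subset E(\s)\setminus\{0\}$ and a finite cover $Z$ of $E(\s)^{X,Y}$, write $X=\{[x_i,x_i]\}$, $Y=\{[y_j,y_j]\}$, and $Z=\{[p_k,p_k]\}$; observe $\pi([p,p])=e_{pP}$ by (L2). I will perform three reductions. First, iterating the right LCM property collapses $X$ to a single idempotent $[x,x]$ with $xP=\bigcap_i x_iP$, or forces both sides of the tight identity to vanish if that intersection is empty. Second, any $y_j$ with $y_jP\cap xP=\EM$ contributes trivially and may be discarded, while each remaining $y_j$ can be replaced by the generator of $y_jP\cap xP\subset xP$ without altering either side or $E(\s)^{X,Y}$. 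Third, writing $y_j=xy_j'$ and $p_k=xp_k'$ (the inclusion $p_kP\subset xP$ is automatic from $[p_k,p_k]\in E(\s)^{X,Y}$), the identity $v_xe_{q'P}v_x^*=e_{xq'P}$ from (L2), combined with $v_x^*v_x=1$ and the commutation of $\bigvee$ with conjugation by $v_x$, reduces the target to
\[
\bigvee_k e_{p_k'P}=\prod_j(1-e_{y_j'P}).
\]

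The final step uses the foundation set relation of $\Q(P)$. From $c(aP\cap bP)=caP\cap cbP$ (a direct consequence of left cancellativity), $p_kP\cap y_jP=\EM$ yields $p_k'P\cap y_j'P=\EM$, so $\{e_{p_k'P}\}$ and $\{e_{y_j'P}\}$ are mutually orthogonal families. I claim $F:=\{p_1',\ldots,p_K',y_1',\ldots,y_n'\}$ is a foundation set of $P$: given $q'\in P$, either some $y_j'P\cap q'P\neq\EM$, or $q:=xq'$ satisfies $qP\cap y_jP=x(q'P\cap y_j'P)=\EM$, placing $[q,q]\in E(\s)^{X,Y}$ and hence, by the cover hypothesis, producing some $p_k$ with $p_kP\cap qP\neq\EM$, whence $p_k'P\cap q'P\neq\EM$. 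The boundary relation $\prod_{f\in F}(1-e_{fP})=0$ then expands, via the orthogonality, to $\bigvee_k e_{p_k'P}+\bigvee_j e_{y_j'P}=1$, which rearranges to the desired identity. The main obstacle is the bookkeeping around the reductions and the mild degenerate cases (empty cover, empty intersection $\bigcap_i x_iP$); once this is in order, the foundation set relation defining $\Q(P)$ furnishes the tight identity immediately.
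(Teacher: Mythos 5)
Your proof is correct, but it takes a genuinely different route from the paper's. The paper first uses the boundary relation to show that $\bigvee_{f\in F}\pi([f,f])=1$ for every foundation set $F$, then invokes \cite[Proposition 11.8]{Ex08} to reduce tightness to the single-idempotent case, i.e.\ to showing $\bigvee_{z\in Z}\pi(z)=e_{pP}$ for every finite cover $Z$ of $[p,p]$; this is handled exactly as in your final stage, by observing that $\{a_z\mid z=pa_z\in Z\}$ is a foundation set and conjugating by $v_p$. You instead verify the tightness identity directly from the definition for arbitrary pairs $(X,Y)$: collapsing $X$ to a single $[x,x]$ via the right LCM property, refining $Y$ inside $xP$, conjugating by $v_x$, and then feeding the combined set $\{p_k'\}\cup\{y_j'\}$ into the boundary relation, where the cross-orthogonality $e_{p_k'P}e_{y_j'P}=0$ (coming from $[p_k,p_k]\in E(\s)^{X,Y}$ and left cancellativity) makes $(1-\bigvee_k e_{p_k'P})(1-\bigvee_j e_{y_j'P})=0$ collapse to the desired identity. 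What your approach buys is self-containedness -- no appeal to Exel's reduction -- at the cost of the $(X,Y)$ bookkeeping, which you do handle: just note explicitly that when $X=\EM$ you take $x=1_P$ (here the standing hypothesis that $P$ has an identity is used), that $0\in X\cup Y$ or $Z=\EM$ degenerate cases are trivial, and that ``mutually orthogonal'' should be read as orthogonality between the two families (not within each), which is all the expansion requires.
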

\begin{proof}
It is easy to see from Lemma \ref{starnotcalc} and \eqref{Sdef} that the map $\pi$ above is a representation of $\s$. Now, suppose that $F$ is a foundation set. Then, by de Morgan's laws in a Boolean algebra we have
\[
0 = \prod_{f\in F}(1-e_{fP}) = \bigwedge_{f\in F}(\neg \pi([f,f])) = \neg\left(\bigvee_{f\in F} \pi([f,f])\right) = 1-\bigvee_{f\in F} \pi([f,f])
\]
and so $\bigvee_{f\in F} \pi([f,f]) = 1$. Hence by \cite[Proposition 11.8]{Ex08}, to show that $\pi$ is tight we need only check that for every $[p,p]\in E(\s)$ and every finite cover $Z$ of $[p,p]$, we have the equality
\[
\bigvee_{z\in Z}\pi(z) = e_{pP}. 
\]
So, take $p\in P$ and suppose that $Z$ is a finite cover of $[p,p]$. For $Z$ to be a finite cover of $[p,p]$, we must have that for all  $z\in Z$, $zP \subset pP$ and whenever we have $q\in P$ such that $qP\subset pP$, there exists $z\in Z$ such that $qP\cap zP \neq \EM$. The first condition implies that for all $z\in Z$, there exists $a_z\in P$ such that $z = pa_z$. We claim that $\{a_z\mid z\in Z\}$ is a foundation set. Indeed, for every $q\in P$, we have that $pqP\subset pP$, and so there exists $z\in Z$ such that $zP\cap pqP = pa_zP \cap pqP\neq  \EM$, and so $a_zP\cap qP \neq \EM$. Hence $1 = \bigvee_{z\in Z}e_{a_zP}$. Hence we have
\begin{eqnarray*}
e_{pP} &=& v_p1v_p^*\\
       &=& v_p\left(\bigvee_{z\in Z}e_{a_zP}\right)v_p^*\\
       &=& \bigvee_{z\in Z}v_pe_{a_zP}v_p^*\\
       &=& \bigvee_{z\in Z}e_{pa_zP}\\
       &=& \bigvee_{z\in Z}\pi(z)
\end{eqnarray*}
\end{proof}

\begin{lem}\label{L2}
Let $P$ be a right LCM semigroup with identity, let $\s$ be as in \eqref{Sdef}, and let $\pi$ be any tight representation of $\s$. Then for every foundation set $F\subset P$, 
\[
\prod_{f\in F}(1- \pi([f,f])) = 0.
\]
\end{lem}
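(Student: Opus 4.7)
The plan is to reduce the statement to the defining equation of a tight representation for a clever choice of the finite sets $X, Y \subset E(\s)$. The key bridge between the notion of foundation set in $P$ and of cover in $\s$ is the observation that for $p, f \in P$ we have $[f,f][p,p] \neq 0$ if and only if $fP \cap pP \neq \EM$; consequently, when $F \subset P$ is a foundation set, the collection $\{[f,f] : f \in F\} \subset E(\s)$ meets every nonzero idempotent $[p,p]$ of $\s$.

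With this bridge in hand, I would take $X = \emptyset$ and $Y = \{[f,f] : f \in F\}$ in the definition of tight representation. The foundation set condition on $F$ precisely guarantees that $E(\s)^{X,Y}$ contains no nonzero element, so the empty set serves as a (vacuous) finite cover. Applying the defining equation with $Z = \emptyset$ then yields
\[
0 \;=\; \bigvee_{z \in \emptyset} \pi(z) \;=\; \prod_{x \in \emptyset} \pi(x) \prod_{f \in F}\bigl(1 - \pi([f,f])\bigr) \;=\; \prod_{f \in F}\bigl(1 - \pi([f,f])\bigr),
\]
using that the empty supremum is the zero and the empty product is the unit of the Boolean algebra $\mathscr{B}_\pi$.

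An equivalent alternative, mirroring the structure of Lemma \ref{L1}, is to apply tightness with $X = Y = \emptyset$ and $Z = \{[f,f] : f \in F\}$, verify that $Z$ is a cover of $E(\s)^{\emptyset, \emptyset} = E(\s)$ (again exactly the foundation set condition), conclude that $\bigvee_{f \in F} \pi([f,f]) = 1$, and finish via de Morgan's laws inside $\mathscr{B}_\pi$. The only possible subtlety in either route is confirming that the empty-cover case is admissible under the given definition of tight representation, but this is immediate from the vacuous-truth reading of the cover condition, so I do not expect any serious obstacle.
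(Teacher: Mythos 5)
Your proposal is correct, and your ``equivalent alternative'' is essentially the paper's own proof: the paper applies tightness with $X=\{1_\s\}$, $Y=\EM$ and the cover $Z=\{[f,f]\mid f\in F\}$ of $E(\s)^{X,Y}=E(\s)$ (the foundation-set property giving the cover condition), then finishes with de Morgan's laws exactly as you do. Your first route, taking $Y=\{[f,f]\mid f\in F\}$ so that $E(\s)^{\EM,Y}=\{0\}$ and invoking the empty cover, is just a dual application of the same tightness axiom, and the empty cover is indeed admissible under the paper's definition of cover (the condition is vacuous when the covered set has no nonzero elements), so both routes go through.
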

\begin{proof}
Let $F\subset P$ be a foundation set. Again, by de Morgan's laws in a Boolean algebra, we have
\[
\prod_{f\in F}(1- \pi([f,f])) = \bigwedge_{f\in F}(\neg \pi([f,f])) = \neg(\bigvee_{f\in F} \pi([f,f])) = 1 - \bigvee_{f\in F} \pi([f,f]).
\]
Hence we will be done if we can show that $\bigvee_{f\in F} \pi_t([f,f]) = 1$. Let $X = \{1_\s\}$ and $Y= \EM$. Then 
\[
E(\s)^{X, Y} = \{ e\in E(\s) \mid e1_\s = e\} = E(\s)
\] 
and since $F$ is a foundation set, $Z = \{ [f, f]\mid f\in F\}$ is a finite cover for $E(\s)$. Thus we have
\begin{eqnarray*}
\bigvee_{z\in Z}\pi(z) &=& \prod_{x\in X}\pi(x)\prod_{y\in Y}(1-\pi(y))\\
\Rightarrow \hspace{0.5cm} \bigvee_{f\in F}\pi([f,f]) & = & \pi(1)\\
&=& 1.
\end{eqnarray*}
\end{proof}
The above two lemmas combine to give the main result of this section.
\begin{theo}\label{maintheorem}
Let $P$ be a right LCM semigroup with identity, and let $\s$ be as in \eqref{Sdef}. Then there is an isomorphism $\Phi: \Q(P)\to \Ct(\s)$ such that $\Phi(v_pv_q^*) = \pi_t([p,q])$ for all $p,q\in P$.
\end{theo}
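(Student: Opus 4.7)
The plan is to construct $\Phi$ and a *-homomorphism $\Psi$ in the opposite direction by invoking the universal properties of the two algebras, with Lemmas \ref{L1} and \ref{L2} supplying exactly the input needed. Lemma \ref{L1} asserts that $[p,q] \mapsto v_p v_q^*$ is a tight representation of $\s$ in $\Q(P)$, so the universal property of $\Ct(\s)$ immediately yields a *-homomorphism $\Psi: \Ct(\s) \to \Q(P)$ satisfying $\Psi(\pi_t([p,q])) = v_p v_q^*$.

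For the direction stated in the theorem, I set $V_p := \pi_t([p, 1_P])$ for each $p \in P$ and $E_{pP} := \pi_t([p,p])$ for each principal right ideal $pP$ (recall that in the right LCM setting $\J(P)$ is precisely the set of principal right ideals). I then verify that the $V_p$ and $E_{pP}$ satisfy the defining relations of $\Q(P)$, using only the product formula in $\s$. The isometry relation $V_p^*V_p = 1$ follows from $[1_P, p][p, 1_P] = [1_P, 1_P]$. Relation (L1) reduces to $[p, 1_P][q, 1_P] = [pq, 1_P]$; relation (L2) to $[p, 1_P][q,q][1_P, p] = [pq, pq]$; (L3) is immediate from the definition of $0$ in $\s$ together with $\pi_t(1_\s) = 1$; and (L4) uses $pP \cap qP = rP$ with $pp' = qq' = r$ to give $[p,p][q,q] = [r,r]$ (or $0$ when the ideals are disjoint), which matches $E_{pP \cap qP}$. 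The foundation set relation $\prod_{f\in F}(1 - E_{fP}) = 0$ is precisely the conclusion of Lemma \ref{L2}. The universal property of $\Q(P)$ then produces the desired $\Phi: \Q(P) \to \Ct(\s)$ with $\Phi(v_p) = \pi_t([p, 1_P])$ and $\Phi(e_{pP}) = \pi_t([p,p])$, and therefore $\Phi(v_p v_q^*) = \pi_t([p, 1_P][1_P, q]) = \pi_t([p,q])$.

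To finish, I check that $\Phi$ and $\Psi$ are mutually inverse on generators. Composing one way, $\Psi \circ \Phi$ sends $v_p \mapsto v_p v_{1_P}^* = v_p$ and $e_{pP} \mapsto v_p v_p^* = e_{pP}$; composing the other way, $\Phi \circ \Psi$ sends $\pi_t([p,q]) \mapsto \pi_t([p, 1_P][1_P, q]) = \pi_t([p,q])$. Since generators are fixed, both compositions are the identity.

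There is no serious conceptual obstacle: the heavy lifting has already been done in Lemmas \ref{L1} and \ref{L2}. The only care required is the routine but slightly fiddly verification of (L2) and (L4) from the multiplication rule in $\s$, and the bookkeeping of making sure the proposed generators in $\Ct(\s)$ genuinely satisfy every relation before invoking the universal property of $\Q(P)$.
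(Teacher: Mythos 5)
Your proof is correct and follows essentially the same route as the paper: one direction from Lemma \ref{L1} and the universal property of $\Ct(\s)$, the other from Lemma \ref{L2} and the universal property of $\Q(P)$, then checking the two compositions are the identity on generators. The only difference is that you spell out the routine verification of (L1)--(L4) for $V_p = \pi_t([p,1_P])$ and $E_{pP} = \pi_t([p,p])$, which the paper leaves implicit.
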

\begin{proof}
By Lemma \ref{L1} and the fact that $\Ct(\s)$ is universal for tight representations of $\s$, there exists a $*$-homomorphism $\Phi_\pi: \Ct(\s) \to \Q(P)$ such that $\Phi_\pi\circ \pi_t([p, q]) = v_pv_q^*$. Conversely, by Lemma \ref{L2} and the universal property of $\Q(P)$, there exists a $*$-homomorphism $\Phi: \Q(P)\to \Ct(\s)$ such that $\Phi(v_pv_q^*) = \pi_t([p,q])$. Hence $\Phi\circ\Phi_\pi$ is the identity on $\Q(P)$, $\Phi_\pi\circ\Phi$ is the identity on $\Ct(\s)$, and so $\Phi$ is an isomorphism.
\end{proof}

\section{Properties of $\Q(P)$}\label{properties}
One of the consequences of Theorem \ref{maintheorem} is that $\Q(P)$ is isomorphic to the C*-algebra of an \'etale groupoid, and we may therefore study $\Q(P)$ by studying the groupoid.

\subsection{$\Q(P)$ as a groupoid C*-algebra}\label{groupoidsubsection}
We now review the construction of the tight groupoid of an inverse semigroup. For more, the interested reader is directed to \cite{Ex08}. 

Let $S$ be an inverse semigroup. There is a natural partial order on $S$ given by $s \leqslant t$ if and only if $s = ts^*s$. If $e, f\in E(S)$, $e \leqslant f$ if and only if $ef = e$. This partial order is best understood in the context of the inverse semigroup $\mathcal{I}(X)$ -- here we have $\varphi \leqslant \psi$ if and only if $\psi$ extends $\varphi$ as a function.

In this order, each pair $e, f\in E(S)$ has a unique greatest lower bound, namely their product $ef$. Hence, with the order above $E(S)$ is a semilattice. If $S$ has an identity $1_S$, then it is the unique maximal element of $E(S)$, and if $S$ has a zero element it is the unique minimal element of $E(S)$. 

A {\em filter} in $E(S)$ is a {\em proper} subset $\xi\subset E(S)$ which is {\em downwards directed} in the sense that $e, f\in \xi$ implies that $ef\in \xi$, and {\em upwards closed} in the sense that if $e\in \xi$, $f\in E(S)$ and $e\leqslant f$ implies that $f\in \xi$. If a subset $\xi\subset E(S)$ is proper and downwards directed it is called a {\em filter base}, and the set 
\[
\overline{\xi} = \{e\in E(S)\mid f\leqslant e\text{ for some }f \in \xi\},
\]
called the {\em upwards closure} of $\xi$, is a filter. A filter is called an {\em ultrafilter} if it is not properly contained in another filter. Ultrafilters always exist by Zorn's Lemma. 

We let $\Ef(S)$ denote the set of filters in $E(S)$. This set has a natural topology given by seeing it as a subspace of $\{0,1\}^{E(S)}$ with the product topology. There is a convenient basis for this topology: for finite sets $X, Y\subset E(S)$, let
\[
U(X,Y) = \{ \xi \in \Ef(S)\mid x\in \xi\text{ for all }x\in X, y\notin \xi \text{ for all }y\in Y\}.
\]
These sets are open and closed and generate the subspace topology on $\Ef(S)$ as $X$ and $Y$ range over all finite subsets of $E(S)$. Let $\Eu(S)$ denote the subspace of ultrafilters. We shall denote by $\Et(S)$ the closure of $\Eu(S)$ in $\Ef(S)$ and call this the space of {\em tight} filters. 

If $X$ is a topological space and $S$ is an inverse semigroup, recall that an {\em action} of $S$ on $X$ is a pair $(\{D_e\}_{e\in E(S)},\{\theta_s\}_{s\in S})$ such that each $D_e\subset X$ is open, the union of the $D_e$ coincides with $X$, each map $\theta_s: D_{s^*s}\to D_{ss^*}$ is continuous and bijective, and for all $s, t\in S$ we have $\theta_s\circ \theta_t = \theta_{st}$, where composition is on the largest domain possible. These properties imply that $\theta_{s^*} = \theta_s^{-1}$ and so each $\theta_s$ is actually a homeomorphism. 

There is a canonical way to construct an \'etale groupoid from an inverse semigroup action. Let $\theta = (\{D_e\}_{e\in E(S)},\{\theta_s\}_{s\in S})$ be an action of an inverse semigroup $S$ on a space $X$, and let
\[
S\times_\theta X : = \{ (s, x)\in S\times X\mid x\in D_{s^*s}\}.
\]
For $(s, x), (t, y)\in S\times_\theta X$ we write $(s,x)\sim (t, y)$ if $x = y$ and there exists $e\in E(S)$ such that $x\in D_e$ and $se = te$. It is straightforward to check that $\sim$ is an equivalence relation. We write $[s, x]$ for the equivalence class of $(s, x)$ and we let $\G(S, X, \theta)$ denote the set of all such equivalence classes. This set becomes a groupoid when given the operations
\[
[s,x]^{-1} = [s^*, \theta_s(x)], \hspace{0.5cm} r([s,x]) = \theta_s(x), \hspace{0.5cm} d([s,x]) = x, \hspace{0.5cm} [t, \theta_s(x)][s,x] = [ts, x].
\]
For $s\in S$ and $U$ an open subset of $D_{s^*s}$, let
\[
\Theta(s, U) =\{[s, x]\mid x\in U\}.
\]
As $s$ and $U$ vary, these sets form a basis for an \'etale topology on $\G(S, X, \theta)$;  with this topology $\G(S, X, \theta)$ is called the {\em groupoid of germs} of the action $\theta$. In this topology, $\Theta(s, U)$ is an open bisection, and if $U$ is in addition closed (resp. compact), $\Theta(s, U)$ is a clopen (resp. compact open) bisection. It is easy to see that the orbit of a point $x\in X$ under the groupoid of germs is the set $\{\theta_s(x)\mid s\in S\}$.
 
An inverse semigroup acts naturally on $\Et(S)$. Let $D_e = \{\xi\in \Et(S)\mid e\in \xi\}$, and define $\theta_s: D_{s^*s}\to D_{ss^*}$ by
\[
\theta_s(\xi) = \overline{s\xi s^*} = \overline{\{ses^*\mid e\in \xi\}}.
\]
The groupoid of germs of this action is denoted
\[
\Gt(S): = \G(S, \Et, \theta)
\]
and is called the {\em tight groupoid} of $S$. The C*-algebra of $\Gt(S)$ is naturally isomorphic to $\Ct(S)$; in particular the map $\pi:S \to C^*(\Gt)$ given by $\pi(s) = \chi_{\Theta(s, D_{s^*s})}$ is a tight representation.

If $P$ is a right LCM semigroup and $\s$ is as in \eqref{Sdef}, then it is easy to see that $\J(P)$ and $E(\s)$ are isomorphic as semilattices, with the isomorphism being the map $pP\mapsto [p,p]$. The spaces of filters and ultrafilters in $\J(P)$ were considered in previous study of C*-algebras associated to $P$, though filters were termed {\em directed} and {\em hereditary} subsets of $\J(P)$ while the ultrafilters were called {\em maximal} directed hereditary subsets. In what follows, we will consider the elements of $\Et(\s)$ as tight filters in $\J(P)$, and will shorten $D_{[p,p]}$ to $D_p$ (noting that $D_p = D_{pu}$ for all $u\in U(P)$. We will then have, 
\[
\theta_{[p,q]}: D_q\to D_p
\]
\[
\theta_{[p,q]}(\xi) = \{p(q^{-1}rP)\mid rP\in \xi\},
\]
for any $p, q\in P$ and $\xi\in \Et(\s)$ with $qP\in \xi$.

\subsection{Simplicity of $\Q(P)$}\label{simplicitysubsection}
We now characterize when $\Q(P)$ is simple using the fact that it is isomorphic to the C*-algebra of the \'etale groupoid $\Gt(\s)$. To use the characterization of Theorem \ref{groupoidsimple} (which is \cite[Theorem 5.1]{BCFS14}), we need conditions which guarantee that $\Gt(\s)$ is Hausdorff, minimal, and topologically principal. We begin with Hausdorff.

As noted in \cite[\S 6]{Ex08}, $\Gt(S)$ is Hausdorff if $S$ is {\em E*-unitary}, that is, for all $s\in S$ and $e\in E(S)\setminus \{0\}$, $e \leqslant s$ implies that $s\in E(S)$. Norling notes in \cite[Corollary 3.24]{No14} that if $P$ is a right LCM semigroup with identity and $\s$ is as in \eqref{Sdef}, then $P$ is cancellative if and only if $\s$ is E*-unitary. Thus if $P$ is cancellative, $\Gt(\s)$ is Hausdorff. However, by \cite[Theorem 3.16]{EP14} we can do a little bit better. We are more precise below, but what we prove is essentially that $\Gt(\s)$ is Hausdorff if and only if the counterexamples to right cancellativity in $P$ have a ``finite cover'' in some sense.
 
For $p, q\in P$, let $P_{p, q} = \{b\in P\mid pb = qb\}$. If $P_{p, q}$ is nonempty then it is a right ideal of $P$, and in this case we say that $p$ and $q$ {\em meet}. We introduce the following condition that $P$ may satisfy.
\begin{enumerate}
\item[(H)]For every $p, q\in P$ which meet, there is a finite set $F\subset P$ such that $pf = qf$ for all $f\in F$ and whenever we have $b\in P$ such that $pb = qb$, there is an $f\in F$ such that $fP\cap bP \neq \EM$. 
\end{enumerate}
One sees that (H) is weaker than right cancellativity, since if $P$ is right cancellative $p$ only meets $q$ when $p=q$, and in this case $P_{p, q} = P$ and the finite set $F = \{1_P\}$ verifies (H).
\begin{prop}\label{hausdorff}
Let $P$ be a right LCM semigroup with identity, and let $\s$ be as in \eqref{Sdef}. Then $\Gt(\s)$ is Hausdorff if and only if $P$ satisfies condition (H).
\end{prop}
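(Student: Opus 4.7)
My plan is to invoke \cite[Theorem 3.16]{EP14}, which characterizes Hausdorffness of the tight groupoid of an inverse semigroup with zero: $\Gt(S)$ is Hausdorff if and only if for every $s \in S$ the set $J_s = \{e \in E(S) \setminus \{0\} : e \leqslant s\}$ of nonzero idempotents below $s$ admits a finite cover within itself, meaning a finite $F \subset J_s$ such that every $e \in J_s$ satisfies $ef \neq 0$ for some $f \in F$. I would apply this criterion to $S = \s$ and show that the translated condition is exactly (H).

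The first step is to identify $J_{[p,q]}$ explicitly. Using $e \leqslant s \Leftrightarrow se = e$ together with the multiplication formula in $\s$, one computes $[p,q][a,a] = [pq', aa']$ whenever $qP \cap aP = rP$ with $qq' = aa' = r$. Requiring this to equal $[a,a]$ modulo the unit equivalence defining $\s$ forces $a'$ to be a unit and $pq' = qq'$, i.e., $q' \in P_{p,q}$ and $a$ agrees with $pq' = qq'$ up to a unit. Thus the nonzero idempotents below $[p,q]$ are precisely the classes $[pb, pb]$ as $b$ ranges over $P_{p,q}$; in particular $J_{[p,q]} = \EM$ exactly when $p$ and $q$ fail to meet, and $[p,p]$ provides a trivial one-element cover whenever $[p,q]$ itself is idempotent.

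The second step is to translate the covering condition into (H). For $b, c \in P_{p,q}$, the product $[pb,pb][pc,pc]$ is nonzero iff $pbP \cap pcP \neq \EM$, which by left cancellativity of $P$ is equivalent to $bP \cap cP \neq \EM$. Hence a finite cover of $J_{[p,q]}$ by elements of $J_{[p,q]}$ corresponds bijectively to a finite subset $F \subset P_{p,q}$ such that every $b \in P_{p,q}$ admits some $f \in F$ with $fP \cap bP \neq \EM$ --- precisely (H) for the pair $(p,q)$. Since the EP14 condition is vacuous on $0$ and on classes $[p,q]$ whose defining pair does not meet, quantifying it over all of $\s$ amounts to imposing (H) on every meeting pair, yielding the equivalence. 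I expect the main obstacle to be the first-step identification of $J_{[p,q]}$, since it requires careful bookkeeping with the unit equivalence $[p,q] = [pu, qu]$ and with the fact that the cover elements must themselves lie below $[p,q]$ (rather than being arbitrary idempotents of $\s$); once that is settled, the translation in step two and the handling of the edge cases are routine.
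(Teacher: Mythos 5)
Your proposal is correct and follows essentially the same route as the paper: both invoke \cite[Theorem 3.16]{EP14} and reduce Hausdorffness to the statement that each $J_{[p,q]}$ is empty or finitely covered, identify the nonzero idempotents below $[p,q]$ with classes $[pb,pb]$ for $b\in P_{p,q}$ (via the unit argument you sketch), and use left cancellativity to translate the covering condition into condition (H). No substantive differences or gaps.
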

\begin{proof}
We shall show that condition (H) is equivalent to the set
\[
J_{[p,q]} = \{rP\mid[r,r]\leqslant [p,q]\}
\]
either being empty or having a finite cover for all $p,q\in P$. If we do this, then the conclusion will follow from \cite[Theorem 3.16]{EP14}. Notice that if we have $p, q, r\in P$ such that $[r,r]\leqslant [p,q]$, then $[p,q][r,r] = [r,r]$. If $rP \cap qP = kP$ and $rr'=qq' = k$, then we obtain that $[pq', rr'] = [r,r]$ implying that $r'\in U(P)$. This means that $rP = kP$, and so we may assume (perhaps by rechoosing $q'$) that $[pq', r] = [r,r]$, and so $pq' = r= qq'$. Thus, for each element $rP\in J_{[p,q]}$, there is an element $p_r:= q'$ such that $pp_r = r = qp_r$.

First, assume that for all $p, q\in P$ the set $J_{[p,q]}$ is empty or has a finite cover. Suppose that $p, q\in P$ meet, that is, there exists $b\in P$ such that $pb = qb$. Then  
\[
[p,q][qb,qb] = [pb,qb] = [qb, qb] 
\]
and so $qbP= pbP\in J_{[p,q]}$, meaning that $ J_{[p,q]}$ is not empty. Hence there is a finite set $F\subset P$ such that $fP\in  J_{[p,q]}$ for all $f\in F$ and for all $rP\in  J_{[p,q]}$ there exists $f\in F$ such that $rP\cap fP \neq \EM$. By the above, there exists $p_f\in P$ such that $pp_f = f = qp_f$. We now see that the finite set $\{p_f\}_{f\in F}$ verifies (H), because if we have $d$ such that $pd = qd$, there is $f\in F$ such that $fP\cap pdP \neq \EM$, which implies that $p_fP\cap dP \neq \EM$.

Conversely, suppose $P$ satisfies condition (H). If $p, q$ do not meet, then the above discussion shows that $J_{[p,q]}$ is empty. If $p,q$ do meet, let $F$ be the finite set guaranteed by (H), and consider the finite set
\[
pF = qF = \{pf\mid f\in F\}.
\]
If $rP\in J_{[p,q]}$, then again by the above there exists $r'$ such that $pr' = qr' = r$. So, there exists $f\in F$ such that $fP\cap r'P \neq \EM$, which implies that $pfP\cap pr'P = pfP\cap rP\neq \EM$ and we are done.
\end{proof}

Now that we have addressed when $\Gt(\s)$ is Hausdorff, we turn to minimality.
\begin{lem}\label{minimallem}Let $P$ be a right LCM semigroup with identity, and let $\s$ be as in \eqref{Sdef}. Then $\Gt(\s)$ is minimal.
\end{lem}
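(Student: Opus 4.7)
My plan is to prove minimality by showing that every orbit in $\Et(\s)$ is dense. Fix $\xi \in \Et(\s)$ and a nonempty basic open set $V \subseteq \Et(\s)$; the goal is to produce $s \in \s$ with $\theta_s(\xi) \in V$. Identifying $E(\s)\setminus\{0\}$ with $\J(P)\setminus\{\EM\}$ via $[p,p]\leftrightarrow pP$, a basic open of $\Et(\s)$ has the form $U(X,Y)\cap\Et(\s)$ for finite $X,Y$. The right LCM property makes finite intersections of nonempty principal right ideals either empty or principal, and every filter is closed under intersections of its members, so I may assume $X=\{rP\}$ and $Y=\{q_1P,\dots,q_nP\}$. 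Thus $V$ consists of tight filters containing $rP$ but missing each $q_jP$.

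Since $\Et(\s)=\overline{\Eu(\s)}$ and $V$ is a nonempty open set, $V$ contains an ultrafilter $\eta$. In a semilattice with zero, an ultrafilter is exactly a prime proper filter: for any $x\notin\eta$, maximality forces the existence of $y\in\eta$ with $xy=0$. Applying this to each $q_jP\notin\eta$ yields some $b_jP\in\eta$ with $b_jP\cap q_jP=\EM$. Intersecting these with $rP$ and invoking the right LCM property, the ideal $r'P:=rP\cap b_1P\cap\cdots\cap b_nP$ is a single principal right ideal lying in $\eta$, contained in $rP$, and disjoint from every $q_jP$.

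Finally, I apply $s:=[r',1_P]$ to $\xi$. Because $s^*s=[1_P,1_P]$, the germ $\theta_s(\xi)$ is defined, and since $ss^*=[r',r']$, its image lies in $D_{r'}\subseteq D_r$, so $rP\in\theta_s(\xi)$. From the action formula, $\theta_s(\xi)$ is the upward closure of $\{r'tP\mid tP\in\xi\}$; every such generator $r'tP$ is contained in $r'P$, which is disjoint from each $q_jP$, so no $q_jP$ can lie above any generator and hence $q_jP\notin\theta_s(\xi)$ for every $j$. This gives $\theta_s(\xi)\in V$, so the orbit of $\xi$ meets $V$ and minimality follows. The heart of the argument is the construction of $r'$, which is where both features enter: the right LCM property keeps intersections principal, and the ultrafilter/primality property supplies the separators $b_jP$. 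Once $r'$ is in hand, the rest is a direct computation from the formula for $\theta_{[r',1_P]}$.
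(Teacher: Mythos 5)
Your proof is correct and takes essentially the same route as the paper: choose an ultrafilter in the basic open set, use its maximality to produce ideals disjoint from those indexed by $Y$, intersect them (via the right LCM property) into a single principal ideal $r'P$, and translate $\xi$ by $[r',1_P]$ into the open set. The only (harmless) difference is that you run the argument for an arbitrary tight filter $\xi$ rather than only for ultrafilters, which makes the final passage from dense orbits to minimality immediate.
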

\begin{proof}
We will show that for every ultrafilter $\xi$ and open set $U(X,Y) \subset \Et(\s)$, there is a $[p, q]\in\s$ such that $\theta_{[p,q]}(\xi)\in U(X,Y)$. The set $U(X,Y)$ is open, so it contains an ultrafilter $\eta$ which must have the property that $xP\in \eta$ for all $x\in X$ and, for all $y\in Y$, there exists $p_y\in P$ such that $p_yP\in \eta$ and $p_yP\cap yP = \EM$. Because $\eta$ is closed under intersection, there must be $r\in P$ such that
\[
\left(\bigcap_{x\in X}xP \right)\bigcap \left(\bigcap_{y\in Y}p_yP\right) = rP.
\]
Now, $\zeta: = \theta_{[r, 1_P]}(\xi)$ is an ultrafilter which contains $rP$. Since $\zeta$ is upwards directed, $xP, p_yP\in \zeta$ for all $x\in X, y\in Y$, and so $\zeta\in U(X,Y)$. Thus, the orbit of every ultrafilter is dense. Each open set contains an ultrafilter, so the only nonempty open invariant subset of $\Et$ is $\Et$, and so $\G(\s, \Et, \theta)$ is minimal.  
\end{proof}

Lastly, we discuss conditions which guarantee that $\Gt(\s)$ is topologically principal. We use the following concepts from \cite{EP14}. For an action $(\{D_e\}_{e\in E(S)}, \{\alpha_s\}_{s\in S})$ of an inverse semigroup $S$ on a locally compact Hausdorff space $X$, and $s\in S$, let
\[
F_s = \{ x\in X\mid \alpha_s(x) = x\}
\]
and call this the set of {\em fixed points} for $s$. Also let
\begin{eqnarray}
TF_s &=& \{ x\in X \mid \text{there exists } e\in E(S)\text{ such that } 0\neq e \leqslant s\text{ and }x\in D_e\}\nonumber\\
     &=& \bigcup_{e\leqslant s}D_e \label{TFsopen}
\end{eqnarray}
and call this the set of {\em trivially fixed points} for $s$. From \cite[Theorem 3.15]{EP14}, the groupoid of germs $\G(S, X, \alpha)$ is Hausdorff if and only if $TF_s$ is closed in $D_{s^*s}$ for all $s\in S$. 

\begin{defn}
An action  $(\{D_e\}_{e\in E(S)}, \{\alpha_s\}_{s\in S})$ of an inverse semigroup $S$ on a locally compact Hausdorff space $X$ is said to be {\em topologically free} if the interior of $F_s$ is contained in $TF_s$ for all $s\in S$.
\end{defn}

We note that if $x$ is trivially fixed by some $s$ with $e\leqslant s$ and $x\in D_e$, we have $\alpha_s(x) = \alpha_s(\alpha_e(x)) = \alpha_{se}(x) = \alpha_e(x) = x$, so $x$ is fixed, that is to say that
\[
TF_s\subset F_s \hspace{1cm} \text{for all } s\in S.
\] 
Also, by \eqref{TFsopen}, $TF_s$ is open and so is contained in the interior of $F_s$. Hence stating that $\alpha$ is topologically free is equivalent to saying that $TF_s = \mathring{F}_s$

\begin{theo}\cite[Theorem 4.7]{EP14}
An action $(\{D_e\}_{e\in E(S)}, \{\alpha_s\}_{s\in S})$ of an inverse semigroup $S$ on a locally compact Hausdorff space $X$ is topologically free if and only if $\G(S, X, \alpha)$ is essentially principal.
\end{theo}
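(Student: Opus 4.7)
My plan is to reduce the theorem to two bookkeeping statements about germs in $\G(S,X,\alpha)$: (a) $[s,x] \in \text{Iso}(\G(S,X,\alpha))$ if and only if $x \in F_s$; and (b) $[s,x] \in \G(S,X,\alpha)^{(0)}$ if and only if $x \in TF_s$. Statement (a) is immediate from $d([s,x]) = x$ and $r([s,x]) = \alpha_s(x)$. For (b), the implication $x \in TF_s \Rightarrow [s,x] \in \G(S,X,\alpha)^{(0)}$ follows by picking $e \leqslant s$ with $x \in D_e$ and observing that $[s,x] = [e,x]$ (witnessed by $g := e$, since $se = e$). Conversely, if $[s,x] = [f,x]$ with $f \in E(S)$ and $x \in D_f$, the germ equivalence yields $g \in E(S)$ with $x \in D_g$ and $sg = fg$; setting $h := fg \in E(S)$ and using commutativity of idempotents,
\[
sh = s(fg) = s(gf) = (sg)f = (fg)f = fg = h,
\]
so $h \leqslant s$ and $x \in D_h$, whence $x \in TF_s$. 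Since $\G(S,X,\alpha)^{(0)}$ is open in an \'etale groupoid and contained in $\text{Iso}(\G(S,X,\alpha))$, essential principality is equivalent to the inclusion $\text{Int}(\text{Iso}(\G(S,X,\alpha))) \subset \G(S,X,\alpha)^{(0)}$.

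For the forward direction, assume $\alpha$ is topologically free and let $[s,x] \in \text{Int}(\text{Iso}(\G(S,X,\alpha)))$. Choose a basic open neighborhood $\Theta(t, W) \subset \text{Iso}(\G(S,X,\alpha))$ of $[s,x]$, so $[t,x] = [s,x]$ and $x \in W$. The germ relation supplies $h \in E(S)$ with $x \in D_h$ and $sh = th$; setting $U := W \cap D_h \cap D_{s^*s}$, which is an open neighborhood of $x$, one has $[s,y] = [t,y]$ for all $y \in U$, so $\Theta(s, U) = \Theta(t, U) \subset \text{Iso}(\G(S,X,\alpha))$. By (a), $U \subset F_s$, and hence $x \in \mathring F_s$; topological freeness yields $x \in TF_s$, and (b) gives $[s,x] \in \G(S,X,\alpha)^{(0)}$.

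For the converse, assume $\G(S,X,\alpha)$ is essentially principal and fix $x \in \mathring F_s$. Choose open $U$ with $x \in U \subset F_s$; since $F_s \subset D_{s^*s}$ automatically, the set $\Theta(s, U)$ is a well-defined open bisection, and (a) gives $\Theta(s, U) \subset \text{Iso}(\G(S,X,\alpha))$. Openness forces $\Theta(s, U) \subset \G(S,X,\alpha)^{(0)}$, so in particular $[s,x] \in \G(S,X,\alpha)^{(0)}$, and (b) gives $x \in TF_s$. The main obstacle I expect is the relabeling step in the forward direction: an arbitrary basic neighborhood of $[s,x]$ need not be indexed by $s$ itself, and the idempotent computation sketched in the first paragraph is what lets one replace such a neighborhood by one of the form $\Theta(s, U)$ before topological freeness can be invoked. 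Everything else is a direct dictionary between the action $\alpha$ and its groupoid of germs.
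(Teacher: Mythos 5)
The paper does not contain its own proof of this statement—it is quoted directly from Exel and Pardo—so there is nothing internal to compare against; your argument is correct and follows exactly the standard germ-level dictionary (namely $[s,x]\in\mathrm{Iso}$ iff $x\in F_s$, and $[s,x]\in\G(S,X,\alpha)^{(0)}$ iff $x\in TF_s$, plus the observation that $\G^{(0)}$ is open and contained in the isotropy) that underlies the original proof. The only point you leave implicit is the convention $D_0=\varnothing$, which guarantees that the idempotent $h=fg$ with $x\in D_h$ is nonzero, as required by the definition of $TF_s$.
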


We now show that we can characterize when the canonical action of $\s$ on $\Et(\s)$ is topologically free by considering the behaviour of a subsemigroup of $P$ which generalizes one originally considered in \cite{CL07}. 
\begin{prop}\label{coreprop}
Let $P$ be a right LCM semigroup with identity. Then the set 
\begin{equation}\label{core}
P_0 := \{p\in P\mid pP\cap qP\neq \EM\text{ for all }q\in P\}
\end{equation}
is a subsemigroup of $P$ which contains the identity. Furthermore, 
\begin{enumerate}
\item $pq\in P_0$ implies that $p, q\in P_0$, and
\item$p, q\in P_0$ and $pP\cap qP = rP$ implies that $r\in P_0$. 
\end{enumerate}
\end{prop}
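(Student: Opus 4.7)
The proof is an exercise in chasing right ideals, leveraging left cancellativity and the right LCM property at each step. I would split it into four parts corresponding to the claims (identity, closure under multiplication, claim (1), claim (2)).

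The identity $1_P$ is in $P_0$ because $1_P P = P$ meets every $qP$. For closure under multiplication, given $p,q\in P_0$ and an arbitrary $r\in P$, I would first use $p\in P_0$ to get $pP\cap rP = kP$ with $k=pp'$ for some $p'\in P$, and then use $q\in P_0$ to produce $m\in qP\cap p'P$. Then $pm\in pqP\cap pp'P\subset pqP\cap rP$, as required.

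For claim (1), assume $pq\in P_0$. Showing $p\in P_0$ is immediate since $pqP\subset pP$, so any nonempty $pqP\cap rP$ is inside $pP\cap rP$. Showing $q\in P_0$ is where left cancellativity enters: for any $r\in P$, applying the defining property of $pq\in P_0$ to the element $pr$ gives $a,b\in P$ with $pqa=prb$, and cancelling $p$ yields $qa=rb\in qP\cap rP$.

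For claim (2), with $pP\cap qP=rP$, I need to show $rP$ meets every $sP$. The key observation is that $rP\cap sP$ equals the triple intersection $pP\cap qP\cap sP$, so it suffices to build one element in the latter. First, since $q\in P_0$, the set $qP\cap sP$ is a nonempty principal right ideal $kP$ with $k\in qP\cap sP$; then, since $p\in P_0$ and $kP$ is again a principal right ideal, $pP\cap kP$ is nonempty, and any element in it sits inside $pP\cap qP\cap sP=rP\cap sP$. The main (mild) obstacle in the whole argument is just being careful in this last step that the iterated intersections land where I claim they do; there is no deep difficulty since, once the ideals are written as principal, the right LCM hypothesis applied twice does all the work.
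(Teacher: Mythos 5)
Your proof is correct and takes essentially the same ideal-chasing approach as the paper, whose own argument only sketches closure under multiplication via the identity $pqP\cap rP = p\bigl(qP\cap p^{-1}(pP\cap rP)\bigr)$ and defers the remaining details to the cited lemma of Crisp--Laca; your element-wise computation for $pq\in P_0$ is exactly that identity unfolded. The remaining parts (the identity element, claims (1) and (2)) are argued correctly, with left cancellativity invoked precisely where it is needed.
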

\begin{proof}
The details of this proof are almost identical to that of \cite[Lemma 5.3]{CL07}. For instance, take $p, q\in P_0$, and $r\in P$. A short calculation shows that $pqP\cap rP = p(qP\cap p^{-1}(pP\cap rP))$, and since $p, q\in P_0$ this must be nonempty, whence $pq\in P_0$.
\end{proof}
\begin{defn}\label{coredef}\cite[Definition 5.4]{CL07} Let $P$ be a right LCM semigroup with identity. Then the subsemigroup $P_0\subset P$ from \eqref{core} is called the {\em core} of $P$.
\end{defn}
The subsemigroup $P_0$ was defined in $\cite{CL07}$ when $P$ is quasi-lattice ordered, though it still makes sense in our context. We note that for all $p\in P_0$, the singleton $\{p\}$ is a foundation set, and so $v_p$ is a unitary in $\Q(P)$. We also note that $U(P)\subset P_0$, though this inclusion may be proper.\footnote{We mention in passing that a cancellative semigroup $P$ for which $P_0 = P$ is called an {\em Ore semigroup}. We do not know if this a reason for the terminology of \cite{CL07}, but in light of this perhaps we should call $P_0$ the cOre of $P$.}

Now let
\[
\s_0 := \{[p,q]\in \s\mid p,q\in P_0\}.
\]
We will also call this the {\em core} of $\s$. For $a, b, c, d\in P_0$, we see that $[a,b][c,d] = [ab', dc']$ where $bP\cap cP = rP$ and $bb' = cc' = r$ implies that $r\in P_0$ and hence $b', c'\in P_0$. Thus, $\s_0$ is an inverse subsemigroup of $\s$. We note that $\s_0$ does not contain the zero element.

\begin{prop}\label{EPcore}\footnote{In preparation of this work we noticed that this proposition may be proved for more general inverse semigroup actions, see Appendix \ref{coreappendix}.}
Let $P$ be a right LCM semigroup with identity which satisfies condition (H), and let $\s$ be as in \eqref{Sdef}. Then $\Gt(\s)$ is essentially principal if and only if for all $s\in \s_0$, each interior fixed point of $s$ is trivially fixed.
\end{prop}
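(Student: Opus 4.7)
The forward direction is immediate from the cited Exel--Pardo theorem: essential principality of $\Gt(\s)$ is equivalent to topological freeness of the canonical action of $\s$ on $\Et(\s)$, and this property automatically restricts to the subaction by $\s_0 \subseteq \s$.

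For the reverse direction, assume the hypothesis on $\s_0$, and fix $s = [p, q] \in \s$ together with $\xi \in \mathring{F}_s$; the aim is to show $\xi \in TF_s$. My plan is a germ-reduction: I will produce an element $s' \in \s_0$ and an idempotent $e = [c, c]$ with $cP \in \xi$ satisfying $se = s'e$. Granting this, the equation together with $\xi \in D_e$ shows by definition that the germs $[s, \xi]$ and $[s', \xi]$ coincide, and since the common domain $D_{s^*s} \cap D_e = D_{(s')^*s'} \cap D_e$ (obtained from $(se)^*(se) = (s'e)^*(s'e)$) contains a neighborhood of $\xi$ on which $\theta_s$ and $\theta_{s'}$ agree, we conclude $\xi \in \mathring{F}_{s'}$. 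The hypothesis then gives $0 \neq f \leqslant s'$ with $\xi \in D_f$. Setting $g := ef$, one checks $sg = sef = s'ef = ef = g$ (using $s'f = f$ and commutativity of idempotents), so $g \leqslant s$; combined with $\xi \in D_e \cap D_f = D_g$, this witnesses $\xi \in TF_s$.

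To construct $s'$ and $e$, I would exploit two facts. First, for every $p_0 \in P_0$ the singleton $\{p_0\}$ is a foundation set (by the very definition of $P_0$), so tightness of $\xi$ forces $p_0 P \in \xi$ for all $p_0 \in P_0$; in this sense $\xi$ is rich in ``core'' ideals. Second, the interior hypothesis provides a basic open $U(X, Y) \cap D_q$ containing $\xi$ and consisting of fixed points of $s$. Since $\xi$ is closed under intersection, I may assume $X = \{aP\}$ with $aP \in \xi$ and $aP \subseteq pP \cap qP$; writing $a = qb_0$ then gives $[p, q][a, a] = [pb_0, a]$. Proposition~\ref{coreprop} lets me refine $a$ downward to some $c$ with $cP \in \xi$ and $c \in P_0$, by intersecting $a$ with a suitable core element already forced into $\xi$.

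The main obstacle is ensuring that both components of the refined product lie in $P_0$: beyond $c \in P_0$, one must show that the partner element $pb$ (where $c = qb$) also lies in $P_0$, so that $s' := [pb, c] \in \s_0$. Establishing this requires the full interior fixed-point hypothesis (not merely that $\xi$ itself is fixed), condition (H) together with Proposition~\ref{hausdorff}---which controls the defect of right cancellativity of $p$ against $q$---and Proposition~\ref{coreprop}(1),(2) to force $pbP$ to meet every principal right ideal of $P$. This delicate step is precisely the technical content abstracted in Appendix~\ref{coreappendix} to general inverse semigroup actions admitting a notion of ``core''.
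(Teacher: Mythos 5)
The skeleton you set up (forward direction from Exel--Pardo, plus the germ-reduction logic ``$se=s'e$ with $s'\in\s_0$, $e\in\xi$, then pull a fixed idempotent $f\leqslant s'$ back to $ef\leqslant s$'') is sound, but the step that actually produces $s'$ and $e$ is both missing and, in the one concrete form you propose, false. You cannot ``refine $a$ downward to some $c$ with $cP\in\xi$ and $c\in P_0$'': any such $c$ is a right multiple of $a$, and Proposition~\ref{coreprop}(1) then forces $a\in P_0$; equivalently, a sub-ideal $cP\subseteq aP$ of an ideal that misses some principal right ideal misses it too. Core ideals are exactly the \emph{large} ideals, so they can never be reached by intersecting downward inside $\xi$ unless $a$ was core to begin with. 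Moreover your target relation is in general too strong: when the core is trivial (e.g.\ $P=X^*$, so $P_0=\{\EM\text{pty word}\}$ and $\s_0=\{1_\s\}$), the identity $se=s'e$ with $s'\in\s_0$ says precisely $e\leqslant s$, i.e.\ it \emph{is} the conclusion $\xi\in TF_s$, so the reduction gives no traction. The final paragraph, where you assert that condition (H), Proposition~\ref{hausdorff} and Proposition~\ref{coreprop} ``force'' the partner element $pb$ into $P_0$, is exactly the point that cannot be established for a fixed $b$: it holds only for a suitably chosen $b$, and nothing in your sketch proves such a $b$ exists.

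The paper's proof handles precisely this existence question, and by a mechanism absent from your proposal. It reduces by conjugation rather than restriction: one seeks $b\in P$ with $bP\in\xi$ and $[1_P,b][p,q][b,1_P]\in\s_0$ (a fixed idempotent $[r,r]$ below the conjugate then pulls back to $[br,br]\leqslant[p,q]$ with $brP\in\xi$, giving $\xi\in TF_{[p,q]}$). The existence of such a $b$ is proved by contradiction using the full strength of interiority: if for every $bP\in\xi$ the conjugate $[1_P,b][p,q][b,1_P]=[a,c]$ fails to lie in $\s_0$, one picks $z$ with $zP\cap aP=\EM$ (or the analogous $z$ for $c$) and sets $\xi_{bP}:=\theta_{[bz,1_P]}(\xi)$; this is an ultrafilter containing $bP$ but not fixed by $[p,q]$, since any fixed filter containing $bP$ must contain $baP$, which is disjoint from $bzP$. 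The net $(\xi_{bP})_{bP\in\xi}$ converges to $\xi$, contradicting the existence of a neighborhood $U\subseteq F_{[p,q]}$ of $\xi$ consisting of fixed points (here condition (H) is used only to know $TF_{[p,q]}$ is closed, so that such a $U$ avoiding $TF_{[p,q]}$ exists). Without this limiting argument --- or some substitute for it --- your proof does not go through; supplying it would essentially reproduce the paper's proof, with the additional repair of replacing your ``$se=s'e$'' reduction by the conjugation reduction $[1_P,b][p,q][b,1_P]\in\s_0$.
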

\begin{proof}
The ``only if'' direction is obvious. So, assume that for each $s\in \s_0$, each interior fixed point of $s$ is trivially fixed, and suppose that $\Gt(\s)$ is not essentially principal. Then there must exist $[p,q]\in \s$ such that $TF_{[p,q]}\subsetneq \mathring{F}_{[p,q]}$. Since $P$ satisfies condition (H), $TF_{[p,q]}$ is closed in $D_{q}$ and so $\mathring{F}_{[p,q]}\setminus TF_{[p,q]}$ is open, so we can find an open set $U \subset \mathring{F}_{[p,q]}\setminus TF_{[p,q]}$. Since $U$ is open, it must contain an ultrafilter $\xi\in U$. If we are able to find a $b\in P$ such that $bP\in \xi$ and $[1_P, b][p,q][b,1_P]\in \s_0$, then $D_b$ would contain $\xi$, and so $\theta_{[1_P,b]}(\xi)$ is fixed by $[1_P, b][p,q][b,1_P]$. By assumption, it must be in $TF_{[1_P, b][p,q][b,1_P]}$, so we can find a nonzero idempotent $[r,r]$ such that $[r,r]\leqslant [1_P, b][p,q][b,1_P]$ with $rP\in \theta_{[1_P,b]}(\xi)$. A short calculation shows that this implies that $[b,1_P][r,r][1_P, b]= [br, br]$ is a nonzero idempotent less than $[p,q]$. Furthermore, $rP\in \theta_{[1_P,b]}(\xi)$ implies that $brP\in \xi$, and so $\xi\in D_{br}$ which would imply that $\xi$ is trivially fixed by $[p,q]$, a contradiction. So finding such a $b\in P$ would prove the proposition.

So, suppose that $[1_P, b][p,q][b,1_P]\notin \s_0$ for all $b\in P$ such that $bP\in \xi$, and fix an element $bP\in \xi$. Because $\xi$ is fixed by $[p,q]$, we have that $p(q^{-1}(bP))\in \xi$. Hence, there exist $b_1, q_1, r_1\in P$ such that $bP\cap qP = r_1P$ and $bb_1 = qq_1 = r_1$, and so we have $p(q^{-1}(bP) = pq_1P\in \xi$. Since $bP, pq_1P\in \xi$, there exist $p_1, b_2, r_2\in P$ such that $pq_1P\cap bP = r_2P$ and $pq_1p_1 = bb_2 = r_2$. Upon redefining $a:= b_2$ and $c:= b_1p_1$, a short calculation shows that 
\[
[1_P,b][p,q][b, 1_P] = [a,c].
\] 
Since we are assuming that this is not an element of $\s_0$, we must have that one of $a$ or $c$ is not an element of $P_0$. Suppose for the moment that $a\notin P_0$, which means there exists $z\in P$ such that $zP\cap aP = \EM$. Letting $\xi_{bP} = \theta_{[bz, 1]}(\xi)$, we see that $bP\in \xi_{bP}$ (because $bzP\in \xi_{bP}$ and $\xi_{bP}$ is upwards closed). However, $\xi_{bP}$ is not fixed by $[p,q]$ (whether $\theta_{[p,q]}(\xi_{bP})$ is defined or not) because any filter containing $bP$ and fixed by $[p,q]$ must contain $r_2P = baP$ by the same reasoning as above, and since $\xi_{bP}$ is closed under intersections this would mean that it contains $baP\cap bzP$, which is empty by assumption. In a similar fashion, we can construct an ultrafilter $\xi_{bP}$ containing $bP$ not fixed by $[p, q]$ if we instead assume that $c\notin P_0$. Hence we have constructed a net $\{\xi_b\}_{bP\in \xi}$ of ultrafilters each of which contains $bP$ but none of which is fixed by $[p,q]$. This net converges to $\xi$, and so $U$ contains a point in this net, which is impossible since $U$ is fixed by $[p,q]$. Hence, we are forced to conclude that we can find $b\in P$ such that $bP\in \xi$ and $[1_P, b][p,q][b,1_P]\in \s_0$, and so we are done.
\end{proof}

We would like an algebraic condition on $P$ which guarantees $\Gt(\s)$ is essentially principal. To do this we recall some terminology from \cite{EP14}

\begin{defn}
Let $S$ be an inverse semigroup, let $s\in S$. If $e\in E(S)$ is an idempotent with $e\leqslant s^*s$ it is said that
\begin{enumerate}
\item  $e$ is {\em fixed} by $s$ if $e\leqslant s$ (ie, $se = e$), and
\item  $e$ is {\em weakly fixed} by $s$ if $sfs^*f \neq 0$ for every nonzero idempotent $f\leqslant e$. 
\end{enumerate} 
\end{defn}
We translate this terminology to our situation in the following lemma.
\begin{lem}
Let $P$ be a right LCM semigroup with identity, and take $p, q, r\in P$ such that $r = qk$ for some $k\in P$. Then 
\begin{enumerate}
\item $[r,r] = [qk, qk]$ is fixed by $[p,q]$ if and only if $pk=r=qk$, and
\item $[r,r] = [qk,qk]$ is weakly fixed by $[p,q]$ if and only if for all $a\in P$;
\[
qkaP\cap pkaP\neq \EM;
\]
\end{enumerate} 
Also, if $[r,r]$ is fixed by $[p,q]$, it is weakly fixed by $[p,q]$.
\end{lem}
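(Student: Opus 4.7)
The plan is to exploit the explicit multiplication formula on $\s$ to compute, in each of the two parts, the products defining ``fixed'' and ``weakly fixed''. Since we are only intersecting principal right ideals with a common left factor, every intersection will simplify and no auxiliary LCM factors will appear.

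For (1), I will compute $[p,q][qk,qk]$ directly. In the product formula $[a,b][c,d]$, we take $b=q$ and $c=qk$; because $qkP \subset qP$, we have $qP\cap qkP = qkP$, realised by $q\cdot k = qk$ and $qk\cdot 1_P = qk$. The formula therefore gives $[p,q][qk,qk]=[pk,qk]$. The idempotent $[qk,qk]$ is fixed by $[p,q]$ iff $[pk,qk]=[qk,qk]$ as elements of $\s$, i.e.\ iff there exists $u\in U(P)$ with $qk\cdot u = pk$ and $qk\cdot u = qk$. Left cancellativity forces $u=1_P$ from the second equation, hence $pk=qk$, and the converse is immediate.

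For (2), I first identify the nonzero idempotents below $[qk,qk]$. An idempotent $[s,s]$ satisfies $[s,s]\leqslant [qk,qk]$ iff $[s,s][qk,qk]=[s,s]$, and the product rule together with the observation $sP\cap qkP = sP$ (forced by a brief LCM argument using $[s,s]=[s,s][qk,qk]$) shows that this is equivalent to $sP\subseteq qkP$, i.e.\ $s=qka$ for some $a\in P$. Then I compute $[p,q][qka,qka][p,q]^{*}[qka,qka]$ in stages: (i) $[p,q][qka,qka]=[pka,qka]$ since $qP\cap qkaP = qkaP$; (ii) $[pka,qka][q,p]=[pka,pka]$ by the same kind of simplification; and finally (iii) $[pka,pka][qka,qka]$ is nonzero iff $pkaP\cap qkaP\neq\EM$. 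So weak fixing of $[qk,qk]$ by $[p,q]$ is exactly the stated condition over all $a\in P$.

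The final assertion follows immediately: if $[r,r]$ is fixed by $[p,q]$, then by (1) $pk=qk$, so $pkaP=qkaP\neq\EM$ for every $a\in P$, which is the condition in (2). I do not expect any serious obstacle; the only point to be careful about is book-keeping in the product formula, which is genuinely trivial here because every relevant intersection of principal right ideals is one of the two ideals themselves.
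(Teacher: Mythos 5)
Your proposal is correct and takes essentially the same approach as the paper: both identify the idempotents below $[qk,qk]$ as those of the form $[qka,qka]$ and then evaluate $[p,q][qka,qka][q,p][qka,qka]$ (resp.\ $[p,q][qk,qk]$) via the multiplication formula, using that every relevant intersection is $qkaP\cap qP=qkaP$. Your direct derivation of the final assertion from parts (1) and (2) matches the paper's remark that fixed implies weakly fixed.
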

\begin{proof}
Note that in the statement of the lemma, we only consider $r$'s which are right multiples of $q$ because this is exactly what it means to have $[r,r]\leqslant [p,q]^*[p,q]$. 
\begin{enumerate}
\item If $[r,r]$ is fixed by $[p,q]$, we have $[p,q][r,r] = [r,r]$ and so $[r,r] = [pk, r]$. Thus $pk = r = qk$. On the other hand, if $pk = qk = r$, then $[p,q][r,r] = [p,q][qk, qk]= [pk,qk] = [r,r]$, and so $[r,r]$ is fixed by $[p,q]$.
\item Suppose that $[r,r]$ is weakly fixed by $[p,q]$. An idempotent is below $[r,r]$ if and only if it is of the form $[ra, ra]$. Thus $[r,r]$ being weakly fixed by $[p,q]$ implies that for every $a\in P$, 
\[
0\neq [p,q][ra,ra][q,p][ra,ra] = [p,q][qka,qka][q,p][ra,ra] = [pka, pka][ra,ra].
\]
Hence $raP\cap pkaP\neq \EM$. Conversely, if $raP\cap pkaP\neq \EM$ for all $a\in P$, then with the same calculation above we see that for all $a\in P$ the product $[p,q][ra,ra][q,p][ra,ra] \neq 0$, and so $[r,r]$ is weakly fixed by $[p,q]$. 
\end{enumerate}
As in the general situation, it is clear that each fixed idempotent is weakly fixed.
\end{proof}
 
The following statement is implicit in the proof of \cite[Theorem 4.10]{EP14}, though we spell it out here for emphasis.
\begin{lem}
Let $S$ be an inverse semigroup, and suppose that either 
\begin{enumerate}
\item[(i)]every tight filter in $E(S)$ is an ultrafilter, or
\item[(ii)]for every $s\in S$, the set $J_s = \{e\in E(S)\mid e\leqslant s\}$ has a finite cover.
\end{enumerate}
Then for each $s\in S$, $\mathring{F}_s \subset TF_s$ if and only if for all $e$ weakly fixed by $s$, there is a finite cover for $J_e$ consisting of fixed idempotents.
\end{lem}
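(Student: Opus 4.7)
The plan is to reduce both implications to statements about ultrafilters, which admit the convenient characterization ``$x \in \eta$ iff $xg \neq 0$ for every $g \in \eta$'', and then transport the conclusions to arbitrary tight filters via the fact that $F_s$ is closed in $D_{s^*s}$ (as the equalizer of $\mathrm{id}$ and $\theta_s$, two continuous maps into the Hausdorff space $\Et$) together with $\Et = \overline{\Eu}$. The key bridging claim, used in both directions, is that weak fixedness of $e$ by $s$ forces $D_e \subset F_s$. For an ultrafilter $\eta \in D_e$ one shows $sfs^* \in \eta$ for every $f \in \eta$ by refining $f$ to $fe \leq e$, then noting that for any $h \in \eta$ the element $fh$ is a nonzero idempotent below $e$ with $s(fh)s^*(fh) \neq 0$ by weak fixedness; since $s(fh)s^* \leq sfs^*$, monotonicity yields $sfs^* h \neq 0$, and ultrafilter maximality gives $sfs^* \in \eta$, hence $\theta_s(\eta) = \eta$. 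For a tight $\xi \in D_e$, approximation by ultrafilters in the open set $D_e$ together with closedness of $F_s$ finishes the job.

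For the forward direction, once $D_e \subset F_s$ is in hand, the hypothesis gives $D_e \subset \mathring{F}_s \subset TF_s = \bigcup_{f \leq s} D_f$. The space $\Et$ is compact (closed in $\{0,1\}^{E(S)}$) and $D_e$ is clopen, so compactness extracts a finite subcover $\{D_{f_1}, \ldots, D_{f_n}\}$. The set $\{f_1, \ldots, f_n\}$ is then a cover of $J_e$ by fixed idempotents: any nonzero $g \leq e$ lies in some ultrafilter $\eta \in D_e$, which sits in some $D_{f_i}$, forcing $f_i \in \eta$ and hence $f_i g \neq 0$. This direction uses neither hypothesis (i) nor (ii).

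For the backward direction I first settle the case of an ultrafilter $\xi \in \mathring{F}_s$. Picking a basic neighborhood $U(X, Y) \subset F_s$ around $\xi$ and, for each $y \in Y$, a witness $g_y \in \xi$ with $y g_y = 0$ (available since $y \notin \xi$ and $\xi$ is an ultrafilter), I set $e := s^*s \cdot \prod X \cdot \prod_{y \in Y} g_y \in \xi$; upward closure forces any $\eta \in D_e$ to contain each $x \in X$, while $y e = 0$ forbids $y \in \eta$, so $D_e \subset U(X, Y) \subset F_s$. The converse of the bridging claim is immediate (any nonzero $f \leq e$ sits in an ultrafilter in $D_e \subset F_s$, which is fixed, so $sfs^* f \neq 0$), hence $e$ is weakly fixed. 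The cover hypothesis yields $\{f_i\}$ covering $J_e$ with each $f_i \leq s$, and a standard pigeonhole on the ultrafilter $\xi$ places some $f_i \in \xi$, giving $\xi \in TF_s$.

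Hypothesis (i) reduces the general case to this one since $\Et = \Eu$. Under hypothesis (ii), the finite cover $\{e_1, \ldots, e_m\}$ of $J_s$ furnished by the hypothesis yields $TF_s = \bigcup_{i=1}^m D_{e_i}$: the same pigeonhole handles ultrafilters, and for a tight $\xi \in TF_s$ one approximates by ultrafilters in the open set $D_e$ (where $e \leq s$ witnesses $\xi \in TF_s$) and extracts a subnet remaining in some clopen $D_{e_i}$, forcing $\xi \in D_{e_i}$. Hence $TF_s$ is closed, $\mathring{F}_s \setminus TF_s$ is open, and any point there would be approximable by an ultrafilter (by density of $\Eu$ in $\Et$), which the ultrafilter case places back in $TF_s$ — contradiction. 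The main obstacle is precisely handling non-ultrafilter tight points: the sets $\{D_e : e \in \xi\}$ need not form a neighborhood base at such $\xi$, so one cannot directly reproduce the ultrafilter proof, and the detour through closedness of $TF_s$ under (ii) is the essential trick.
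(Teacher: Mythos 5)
The paper offers no proof of this lemma at all---it is recorded as being implicit in the proof of \cite[Theorem 4.10]{EP14}---so your argument can only be measured against the Exel--Pardo proof it points to, and in substance your proof is a correct, self-contained reconstruction of that circle of ideas. Your bridging claim (a weakly fixed $e$ forces $D_e\subset F_s$: first for ultrafilters, via the characterization of ultrafilters as the filters containing every idempotent that meets all of their members, then for arbitrary tight filters by density of ultrafilters and closedness of $F_s$ in $D_{s^*s}$), the compactness of $D_e$ used to extract the finite cover of fixed idempotents, and the sufficiency direction run through ultrafilters---with (i) used to reduce to ultrafilters and (ii) used to make $TF_s = \bigcup_i D_{e_i}$ clopen, the latter being in effect half of \cite[Theorem 3.16]{EP14}---are exactly the ingredients of \cite[\S 4]{EP14}; your remark that necessity needs neither (i) nor (ii) is also consistent with that argument. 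Three small points deserve tightening, none of them gaps: (a) with the paper's definition a cover of $J_e$ must be a \emph{subset} of $J_e$, and the $f_i$ obtained from the finite subcover of $D_e\subset TF_s=\bigcup_{0\neq f\leqslant s}D_f$ need not lie below $e$; replace each $f_i$ by $ef_i$, which is still fixed (as $ef_i\leqslant f_i\leqslant s$), is nonzero whenever $D_{f_i}$ meets $D_e$, and still meets every nonzero $z\leqslant e$ because $ef_iz=f_iz$; (b) the step from ``$sfs^*\in\eta$ for all $f\in\eta$'' to $\theta_s(\eta)=\eta$ silently uses the standard fact that $\theta_s$ carries ultrafilters to ultrafilters, so that the inclusion $\theta_s(\eta)\subset\eta$ forces equality by maximality of $\theta_s(\eta)$; (c) your ``pigeonhole'' is precisely the statement that ultrafilters are tight (or the short argument choosing $g_i\in\xi$ with $f_ig_i=0$ and considering $h=e\prod_i g_i$), and the compactness you actually need is that of the clopen sets $D_e$, which holds for any $S$, rather than compactness of $\Et(S)$ itself, which in the paper's setting follows because $\s$ is unital.
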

The following is a rephrasing of the above result for our situation.
\begin{lem}\label{EPalgebraic}
Let $P$ be a right LCM semigroup with identity which satisfies condition (H), or such that the only tight filters in $\J(P)$ are ultrafilters. Then $\mathring{F}_{[p,q]}\subset TF_{[p,q]}$ if and only if $[p,q]$ satisfies the following condition:
\begin{enumerate}
\item[(EP)] for all $[qk, qk]$ weakly fixed by $[p,q]$, there exists a foundation set $F\subset P$ such that $qkf = pkf$ for all $f\in F$.
\end{enumerate}
\end{lem}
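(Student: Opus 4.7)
The plan is to obtain this lemma as a direct translation of the unnamed lemma immediately above, which is a statement about arbitrary inverse semigroups $S$, into the right LCM semigroup language for $P$. So the work consists only of checking the hypotheses of that lemma and then unpacking its conclusion.

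First, I will verify that one of the two hypotheses (i) or (ii) of the previous lemma holds for $\s$. If the only tight filters in $\J(P)$ are ultrafilters, then via the semilattice isomorphism $E(\s)\setminus\{0\}\cong \J(P)$ (given by $[p,p]\mapsto pP$) hypothesis (i) passes to $\s$ directly. If instead $P$ satisfies (H), then the proof of Proposition \ref{hausdorff} establishes precisely that for every $[p,q]\in\s$ the set $J_{[p,q]}=\{[r,r]\mid [r,r]\leqslant[p,q]\}$ is either empty or admits a finite cover, which gives hypothesis (ii). Either way we may invoke the previous lemma to get the equivalence between $\mathring F_{[p,q]}\subset TF_{[p,q]}$ and the cover-by-fixed-idempotents condition.

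Second, I will translate that abstract conclusion into (EP). By the earlier lemma characterizing fixed and weakly fixed idempotents, any $e\in E(\s)$ weakly fixed by $[p,q]$ lies below $[p,q]^*[p,q]=[q,q]$ and hence takes the form $e=[qk,qk]$, while a fixed idempotent of $[p,q]$ lying below $[qk,qk]$ is of the form $[qkf,qkf]$ with $pkf=qkf$. A finite cover of $J_{[qk,qk]}$ by such fixed idempotents is therefore a finite set $F\subset P$ with $pkf=qkf$ for every $f\in F$ and with the cover property that for every nonzero $[qka,qka]\in J_{[qk,qk]}$ some $f\in F$ satisfies $[qkf,qkf][qka,qka]\neq 0$, i.e.\ $qkfP\cap qkaP\neq \EM$. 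Because $P$ is left cancellative, this last condition is equivalent to $fP\cap aP\neq \EM$ for all $a\in P$, which is exactly the definition of $F$ being a foundation set. Thus the abstract cover condition for the weakly fixed $[qk,qk]$ translates verbatim into the assertion in (EP).

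Assembling the two steps yields the lemma. There is no substantive obstacle beyond careful bookkeeping: the only genuine ingredient is the single application of left cancellativity used to strip the prefix $qk$ from the cover condition and recognize a foundation set in $P$.
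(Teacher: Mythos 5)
Your proposal is correct and matches the paper's intent exactly: the paper gives no separate proof, presenting the lemma as a rephrasing of the preceding abstract lemma, with hypothesis (ii) supplied by the proof of Proposition \ref{hausdorff} under condition (H) (or hypothesis (i) via the semilattice isomorphism $E(\s)\setminus\{0\}\cong\J(P)$), and the translation of the cover-by-fixed-idempotents condition into (EP) via the fixed/weakly-fixed characterization and left cancellativity is precisely the intended bookkeeping you carried out.
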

We note that any idempotent $[p,p]$ satisfies (EP) using the foundation set $\{1_P\}$.

One notices that we still fall slightly short of being able to apply Theorem \ref{groupoidsimple}, because we have only given conditions under which $\Gt(\s)$ is essentially principal, not topologically principal. As discussed above Theorem \ref{groupoidsimple}, these two notions are equivalent when $\Gt(\s)$ is Hausdorff and second countable. We are considering only countable semigroups $P$, and one easily sees that this guarantees that $\Gt(\s)$ is second countable.

We now come to the main result of this section.

\begin{theo}\label{Qsimple}
Let $P$ be a right LCM semigroup with identity which satisfies condition (H), let $P_0$ be the core of $P$, and let $\s$ be as in \eqref{Sdef}. Then $\Q(P)$ is simple if and only if
\begin{enumerate}
\item $\Q(P)\cong C^*_r(\Gt(\s))$, and
\item for all $p, q\in P_0$, the element $[p,q]$ satisfies condition (EP).
\end{enumerate}
\end{theo}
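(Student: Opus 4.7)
The plan is to apply the BCFS14 simplicity criterion (Theorem \ref{groupoidsimple}) to the groupoid $\Gt(\s)$, after identifying $\Q(P)$ with $C^*(\Gt(\s))$ via Theorem \ref{maintheorem} together with the isomorphism $\Ct(\s) \cong C^*(\Gt(\s))$ recalled in Section \ref{groupoidsubsection}. Since $P$ is assumed countable, $\s$ is countable and hence $\Gt(\s)$ is second countable. Condition (H) together with Proposition \ref{hausdorff} guarantees that $\Gt(\s)$ is Hausdorff, and Lemma \ref{minimallem} supplies minimality unconditionally. With Hausdorffness, minimality, and second countability in hand, Theorem \ref{groupoidsimple} reduces the simplicity of $\Q(P)$ to exactly two remaining conditions: the equality $C^*(\Gt(\s)) = C^*_r(\Gt(\s))$ (which, under the isomorphism of Theorem \ref{maintheorem}, is precisely condition (1) of the statement), and topological principality of $\Gt(\s)$.

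The bulk of the work is therefore to show that topological principality of $\Gt(\s)$ is equivalent to condition (2). Because $\Gt(\s)$ is Hausdorff, second countable, locally compact, and étale, the string of equivalences stated after the definition of ``effective'' applies, so topological principality coincides with essential principality. Now I would invoke Proposition \ref{EPcore}: under condition (H), essential principality of $\Gt(\s)$ is equivalent to the statement that for every $s \in \s_0$, every interior fixed point of $s$ is trivially fixed, i.e.\ $\mathring{F}_s \subset TF_s$ for all $s \in \s_0$.

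Finally, Lemma \ref{EPalgebraic} translates the condition $\mathring{F}_{[p,q]} \subset TF_{[p,q]}$ into the purely algebraic condition (EP) on $[p,q]$. To apply Lemma \ref{EPalgebraic} we need either that every tight filter is an ultrafilter, or that the set $J_s$ has a finite cover for every $s$; the latter is what was verified from condition (H) in the proof of Proposition \ref{hausdorff} (the argument showing $J_{[p,q]}$ is either empty or has a finite cover is exactly what (H) guarantees, since the empty case automatically admits a cover). Hence, still under condition (H), the requirement that $\mathring{F}_{[p,q]} \subset TF_{[p,q]}$ holds for all $[p,q] \in \s_0$ becomes exactly the requirement that every $[p,q]$ with $p,q \in P_0$ satisfies (EP), which is condition (2) in the theorem.

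The main obstacle is not any single step but checking that the hypotheses of each imported result are actually in force: in particular, verifying that condition (H) legitimately feeds both into Proposition \ref{hausdorff} (for Hausdorffness, needed to equate topologically principal with essentially principal) and into Lemma \ref{EPalgebraic} (via the finite-cover hypothesis on $J_s$), and being careful that Proposition \ref{EPcore} only demands condition (EP) on the core $\s_0 = \{[p,q] : p,q \in P_0\}$ rather than on all of $\s$ — which is what makes the final statement sharp and what justifies restricting attention to $P_0$ in condition (2). Once this bookkeeping is done, the equivalence follows by stringing together the cited results.
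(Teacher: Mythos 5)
Your proposal is correct and follows essentially the same route as the paper: Hausdorffness from condition (H) via Proposition \ref{hausdorff}, minimality from Lemma \ref{minimallem}, and then the equivalence of topological principality with condition (2) via Proposition \ref{EPcore} and Lemma \ref{EPalgebraic}, all fed into Theorem \ref{groupoidsimple}. The extra bookkeeping you spell out (second countability, topologically principal $\Leftrightarrow$ essentially principal in the Hausdorff case, and (H) supplying the hypotheses of Lemma \ref{EPalgebraic}) is exactly what the paper leaves implicit, so there is no gap.
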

\begin{proof}
By Proposition \ref{hausdorff}, $\Gt(\s)$ is Hausdorff, so we can apply Theorem \ref{groupoidsimple}. By Lemma \ref{minimallem}, $\Gt(\s)$ is always minimal. By  \cite[Theorem 4.7]{EP14}, Proposition \ref{EPcore}, and Lemma \ref{EPalgebraic}, $\Gt(\s)$ is topologically principal if and only if we have (2) above. The result follows. 
\end{proof}

\subsection{Pure infiniteness of $\Q(P)$}\label{purelyinfinitesubsection}
We recall a definition from \cite{EP14}.
\begin{defn}\label{locallycontractingsemigroup}
An inverse semigroup $S$ is called {\em locally contracting} if for every nonzero $e\in E(S)$ there exists $s\in S$ and a finite set $F = \{f_0, f_1, \dots f_n\}\subset E(S)\setminus \{0\}$  with $n\geq 0$ such that for all $0\leq i \leq 1$ we have
\begin{enumerate}
\item $f_i\leq es^*s$,
\item there exists $f\in F$ such that $sf_is^*f \neq 0$, and
\item $f_0sf_i = 0$.
\end{enumerate} 
\end{defn}
As one might guess from the name, if $S$ is locally contracting then $\Gt(S)$ is locally contracting by \cite[Corollary 6.6]{EP14}.

\begin{lem}\label{lemlocallycontracting}
Let $P$ be a right LCM semigroup with identity and let $\s$ be as in \eqref{Sdef}. Then $\s$ is locally contracting if and only if $P\neq P_0$.
\end{lem}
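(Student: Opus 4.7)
My plan is to prove both directions of the equivalence by directly verifying, respectively, the existence and non-existence of the data required by Definition \ref{locallycontractingsemigroup}, using the multiplication formula for $\s$ and Lemma \ref{starnotcalc} throughout.

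For the $(\Leftarrow)$ direction, suppose $P \neq P_0$ and fix $p \in P \setminus P_0$ together with a witness $q \in P$ satisfying $pP \cap qP = \EM$. Given an arbitrary nonzero idempotent $e \in E(\s)$, which we may write as $e = [c, c]$ for some $c \in P$, the key observation is that left cancellativity yields $cpP \cap cqP = c(pP \cap qP) = \EM$, and this disjointness is inherited at every level inside $[c,c]$. I would then set
\[
s := [cq, c], \qquad F := \{f_0, f_1\} := \{[cp, cp],\, [cq, cq]\}.
\]
Condition (1) is immediate since $s^*s = [c,c]$, so $es^*s = [c,c]$, and both $f_0, f_1 \leqslant [c,c]$. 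For condition (2), direct computation shows $sf_0s^* = [cqp, cqp]$ and $sf_1s^* = [cq^2, cq^2]$, both of which lie below $[cq, cq] = f_1$; hence $sf_is^* f_1 \neq 0$ for $i = 0, 1$. For condition (3), the products $f_0 s f_0$ and $f_0 s f_1$ both contain the subproduct $[cp,cp][cq,c]$, which vanishes because it asks for an element of $cpP \cap cqP$.

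For the $(\Rightarrow)$ direction, I argue the contrapositive: assume $P = P_0$ and show that $\s$ is not locally contracting. The hypothesis says that every pair of principal right ideals in $P$ has nonempty intersection, so every pair of nonzero idempotents in $E(\s)$ multiplies to a nonzero element, and more generally, any ``sandwich'' product of nonzero idempotents through an element of $\s$ stays nonzero. Concretely, fix any $e \in E(\s)\setminus\{0\}$, any candidate $s = [a, b] \in \s$, and any finite set $F = \{f_0, \ldots, f_n\}$ of nonzero idempotents $f_i = [c_i, c_i] \leqslant es^*s$. Unwinding $f_0 s f_i$ using the multiplication rule produces intersections of the form $aP \cap c_0 P$ and $c_iP \cap bP$ (reduced via $P_0 = P$), each of which is nonempty. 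Hence $f_0 s f_i \neq 0$ for every $i$, and condition (3) cannot be met.

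The main obstacle is matching the somewhat terse conditions of Definition \ref{locallycontractingsemigroup} to concrete computations in $\s$; in particular, selecting the correct $s$ in the reverse direction so that $s$ conjugates $f_0$ off of itself and into $f_1$ (which is what forces both $f_0 s f_0 = 0$ and $f_0 s f_1 = 0$). The geometric picture to keep in mind is that $s = [cq, c]$ translates everything inside $[c,c]$ into the disjoint region $[cq, cq]$, and the existence of $p$ with $pP \cap qP = \EM$ is precisely the ``room'' needed to separate $f_0 = [cp,cp]$ from the image of $s$. The forward direction is then essentially automatic: $P_0 = P$ leaves no such room, so no separation between $f_0$ and $sf_i$ is possible.
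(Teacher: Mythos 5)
Your proof is correct, and both directions rest on the same underlying facts as the paper's: disjoint principal ideals $pP\cap qP=\EM$ supply the ``room'' for contraction, while $P=P_0$ makes every product of nonzero elements of $\s$ nonzero, so condition (3) of Definition \ref{locallycontractingsemigroup} can never hold (the paper phrases this via the orthogonality of the idempotents $f_0$ and $sf_is^*$, you via $f_0sf_i$ directly; these are equivalent). Where you genuinely diverge is in the ``if'' direction: the paper does not verify Definition \ref{locallycontractingsemigroup} directly but invokes the sufficient criterion of \cite[Proposition 6.7]{EP14}, reducing the task to exhibiting, for each $r\in P$, elements $a=f_1=rp$ and $f_0=rprq$ with $[f_0,f_0][a,1_P][f_1,f_1]=0$. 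You instead check the definition from scratch for each nonzero idempotent $[c,c]$, taking $s=[cq,c]$ and $F=\{[cp,cp],[cq,cq]\}$, and your computations ($s^*s=[c,c]$, $sf_0s^*=[cqp,cqp]\leqslant f_1$, $sf_1s^*=[cq^2,cq^2]\leqslant f_1$, and $f_0s=[cp,cp][cq,c]=0$ since $cpP\cap cqP=c(pP\cap qP)=\EM$) are all accurate. What your route buys is self-containedness -- no appeal to the external proposition -- at the cost of a slightly longer verification; the paper's route is shorter given the citation. One small imprecision in your converse direction: unwinding $f_0sf_i$ for $s=[a,b]$ actually produces the intersections $c_iP\cap bP$ and then $ab'P\cap c_0P$ (where $bb'=c_ic_i'$ is a right LCM), not literally $aP\cap c_0P$; this is harmless, since $P=P_0$ means every pair of principal right ideals meets, so any product of nonzero elements of $\s$ is nonzero and condition (3) fails for every candidate $(s,F)$, as you conclude.
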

\begin{proof}
The ``only if'' direction is trivial, because if $P = P_0$ we could not satisfy part 3 of Definition \ref{locallycontractingsemigroup}, as the product of the two idempotents $f_0$ and $sf_is^*$ could not be zero.

For the ``if'' direction, we suppose that $P\neq P_0$, and hence we can find $p, q\in P$ such that $pP\cap qP = \EM$. By \cite[Proposition 6.7]{EP14}, we will be done if for every $r\in P$ we can find $a\in P$ and $f_0, f_1\in P$ such that $f_0P\subset f_1P\subset rP$, $af_1P\subset f_1P$ and $[f_0,f_0][a,1_P][f_1, f_1] = 0$. To this end, let
\[
a = f_1 = rp \hspace{1cm} f_0 = rprq.
\]
Then clearly $f_0P\subset f_1P\subset rP$, and $af_1P = rprpP\subset f_1P$. We also have that
\[
[f_0,f_0][a,1_P][f_1, f_1] = [rprq, rprq][rp, 1_P][rp, rp] = [rprq, rprq][rprp, rp]
\]
and since $rprqP\cap rprpP = \EM$ by assumption, this product is zero. 
\end{proof}
\begin{theo}\label{purelyinfinite}
Let $P$ be a right LCM semigroup with identity which satisfies condition (H), and suppose that $\Q(P)$ is simple. Then $\Q(P)$ is purely infinite if and only if $\Gt(\s)$ is not the trivial (one-point) groupoid.
\end{theo}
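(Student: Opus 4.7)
The plan is to exploit simplicity together with the characterization of pure infiniteness via locally contracting groupoids from \cite[Proposition 2.4]{AD97}. Since $P$ satisfies (H), Proposition \ref{hausdorff} guarantees $\Gt(\s)$ is Hausdorff, and the assumed simplicity of $\Q(P) \cong C^*(\Gt(\s))$ together with Theorem \ref{groupoidsimple} delivers both $\Q(P) \cong C^*_r(\Gt(\s))$ and topological principality of $\Gt(\s)$. The crux is then to show $\Gt(\s)$ is locally contracting whenever it is not the trivial one-point groupoid.

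The forward direction is immediate: if $\Gt(\s)$ is the trivial one-point groupoid, then $\Q(P) \cong \CC$, which has no infinite projections and so cannot be purely infinite.

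For the backward direction, I would first argue that, under the simplicity hypothesis, $\Gt(\s)$ being nontrivial is equivalent to $P \neq P_0$. Suppose for contradiction that $P = P_0$; then any two principal right ideals meet, so $\xi_0 = \{pP \mid p \in P\}$ is itself a filter in $\J(P)$, and any ultrafilter $\xi$ can be enlarged by any $pP$ (the required intersections being nonempty). By maximality, every ultrafilter equals $\xi_0$, so $\Eu(\s) = \{\xi_0\}$; since $\Ef(\s)$ is Hausdorff as a subspace of $\{0,1\}^{E(\s)}$, the closure $\Et(\s) = \overline{\Eu(\s)}$ is also the singleton $\{\xi_0\}$. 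With a one-point unit space, topological principality forces trivial isotropy at $\xi_0$, making $\Gt(\s)$ the trivial one-point groupoid, contradicting our hypothesis.

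Having established $P \neq P_0$, I would apply Lemma \ref{lemlocallycontracting} to conclude $\s$ is locally contracting, then \cite[Corollary 6.6]{EP14} to transfer this property to $\Gt(\s)$, and finally \cite[Proposition 2.4]{AD97} to conclude that $C^*_r(\Gt(\s)) \cong \Q(P)$ is purely infinite. The main obstacle I anticipate is the argument that $P = P_0$ forces $\Gt(\s)$ to be trivial under the simplicity hypothesis; the rest is a linear chain through results already assembled in this section.
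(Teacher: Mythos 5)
Your proof is correct and takes essentially the same route as the paper: the trivial direction via $\Q(P)\cong\CC$, elimination of the degenerate one-unit case using the topological principality supplied by simplicity and Theorem \ref{groupoidsimple}, and then Lemma \ref{lemlocallycontracting} combined with \cite[Corollary 6.6]{EP14} and \cite[Proposition 2.4]{AD97}. The only (harmless) variation is how you reach $P\neq P_0$: you argue the contrapositive, showing $P=P_0$ forces $\Et(\s)$ to be the single filter consisting of all principal right ideals, whereas the paper observes directly that two distinct ultrafilters yield $p,q$ with $pP\cap qP=\EM$; both arguments are valid.
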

\begin{proof}
The ``only if'' direction is clear, because if $\Gt(\s)$ is one point, its C*-algebra is isomorphic to $\mathbb{C}$, which is not purely infinite.

On the other hand, if $\Gt(\s)$ is not the one-point groupoid, we have two cases. If $\Et(\s)$ is one point then there are no points with trivial isotropy, and so $\Gt(\s)$ is not topologically principal, contradicting Theorem \ref{groupoidsimple}. If $\Et(\s)$ has more than one point, then there are at least two distinct ultrafilters in $\J(P)$. Hence we can find a $p\in P$ and an ultrafilter $\xi$ such that $pP\notin \xi$, and since $\xi$ is an ultrafilter there must be $qP\in \xi$ such that $pP\cap qP = \EM$. Thus neither $p$ nor $q$ is in $P_0$, and so $P\neq P_0$ implying that $\s$ is locally contracting by Lemma \ref{lemlocallycontracting}. Hence, by \cite[Corollary 6.6]{EP14} and \cite[Proposition 2.4]{AD97} $C^*_r(\Gt(\s))\cong \Q(P)$ is purely infinite.
\end{proof}
Hence, in the presence of simplicity, pure infiniteness of $\Q(P)$ follows automatically in all but the most trivial cases.

In \cite[Theorem 5.3]{BL14}, the authors give conditions under which $C^*(P)$ is simple and purely infinite for a right LCM semigroup $P$. Since $\Q(P)$ is in some sense the smallest quotient of $C^*(P)$, it is not surprising that $\Q(P)$ is simple under these milder conditions.
\section{Examples}\label{examplessection}
\subsection{Free semigroups}\label{freesemigroupssection}
Let $X$ be a finite set, and let $X^n$ denote the set of words of length $n$ in $X$, with $X^0$ consisting of a single empty word, $\emptyset$. Let 
\[
X^* = \bigcup_{n\geq 0}X^n.
\]
Then $X^*$ becomes a semigroup with the operation of concatenation: if $\alpha = \alpha_1\alpha_2\cdots\alpha_{k}$ and $\beta = \beta_1\beta_2\cdots\beta_l$ then their product is $\alpha\beta = \alpha_1\alpha_2\cdots\alpha_{k}\beta_1\beta_2\cdots\beta_l$, while the empty word is the identity. If $\alpha\in X^n$, we write $|\alpha| = n$ and say that the {\em length} of $\alpha$ is $n$.  The core of this semigroup is $X^*_0 = U(X^*) = \{\emptyset\}$. If we have $\alpha, \beta\in X^*$, then $\alpha X^* = \beta X^*$ if and only if $\alpha = \beta$. Furthermore, $X^*$ is left cancellative, and either $\alpha X^* \cap \beta X^* = \EM$ or one is included in the other, so $X^*$ is right LCM. 

From the relations (L1)-(L4) it follows easily that $C^*(X^*)$ is the universal unital C*-algebra generated by isometries $v_1, \dots, v_{|X|}$  such that 
\[
v_i^*v_j = \begin{cases}1 &\text{if }i = j\\0&\text{ otherwise,}\end{cases}
\] 
that is, $C^*(X^*)$ is isomorphic to the Toeplitz algebra $\mathcal{TO}_{|X|}$. Furthermore, the set $X = X^1\subset X^*$ is a foundation set, and so in $\Q(X^*)$ we have
\[
0 = \prod_{x\in X}(1 - e_{xX^*}) = 1 - \bigvee_{x\in X}v_xv_x^* = 1-\sum_{x\in X}v_xv_x^*
\]
\[
\Rightarrow \sum_{x\in X}v_xv_x^* = 1
\]
and since the Cuntz algebra $\mathcal{O}_{|X|}$ is the universal C*-algebra generated by such elements, there is a surjective $*$-homomorphism from $\mathcal{O}_{|X|}$ to $\Q(X^*)$ which must be an isomorphism because $\mathcal{O}_{|X|}$ is simple. 

Principal left ideals of $X^*$ are either disjoint or comparable by inclusion, and hence ultrafilters are maximal well-ordered subsets of $\J(X^*)$. The space of ultrafilters can be identified with the compact space $\Sigma_X$ of right-infinite words in $X$ via the homeomorphism
\[
\alpha\in \Sigma_X \mapsto \{X^*, \alpha_1X^*, \alpha_1\alpha_2X^*, \alpha_1\alpha_2\alpha_3X^*, \dots\} \in \Et(\s).
\]
Here every tight filter is an ultrafilter. Because $X^*$ is right cancellative (in fact, it can be embedded in the free group on $|X|$ elements) it satisfies condition (H). The inverse semigroup $\s$ from \eqref{Sdef} is known in the literature as the {\em polycyclic monoid} on $|X|$ generators. For all $\alpha\in X^*$, the idempotent $[\alpha, \alpha]$ is weakly fixed by $[\emptyset, \emptyset]$, and $[\emptyset, \emptyset]$ trivially satisfies condition (EP). 

\subsection{Right LCM semigroups embedded in groups}

In \cite{Li13} Li builds upon his earlier work and makes a comprehensive study of the C*-algebras of semigroups which may be embedded into groups. There the semilattice of constructible ideals $\J(P)$ is considered, though it is not always equal to the set of all principal right ideals. There, the set of filters is denoted $\Sigma$, the set of ultrafilters is denoted $\Sigma_{\text{max}}$ and its closure is denoted $\partial\Sigma$. In \cite[\S 5.2]{Li13} an inverse semigroup analogous to our $\s$ from \eqref{Sdef} is defined; this inverse semigroup acts on $\Sigma$. The groupoid of germs of this action is the universal groupoid for $\s$, and the C*-algebra of its reduction to $\partial\Sigma$ (that is to say, $\Gt(\s)$), is identified as a suitable boundary quotient. Our only contribution to the literature for this situation would seem to be the isomorphism between this boundary quotient and the one defined in \cite{BRRW14}. We do note that our result Proposition \ref{EPcore} is analogous to \cite[Proposition 7.20]{Li13}, and indeed it seems both were inspired by \cite[Proposition 5.5]{CL07}.

\subsection{Zappa-Sz\'ep products of semigroups}\label{zappaszepsection}

The following is a construction considered in \cite{BRRW14}. Let $U$ and $A$ be  semigroups with identities $1_U$ and $1_A$ respectively and suppose there exist maps $A\times U \to U$ given by $(a, u)\mapsto a\cdot u$, and $A\times U \to A$ given by $(a,u)\mapsto \left.a\right|_u$ which satisfy

\vspace{-0.3cm}\begin{tabular}{p{7cm} p{7cm}}
\begin{enumerate}\addtolength{\itemsep}{-0.5\baselineskip}
\item[(ZS1)] $1_A\cdot u = u$
\item[(ZS2)] $(ab)\cdot u = a\cdot(b\cdot u)$
\item[(ZS3)] $a\cdot 1_U = 1_U$
\item[(ZS4)] $a\cdot(uv) = (a\cdot u)(\left.a\right|_u\cdot v)$
\end{enumerate}
&
\begin{enumerate}\addtolength{\itemsep}{-0.5\baselineskip}
\item[(ZS5)] $\left.a\right|_{1_U} = a$
\item[(ZS6)] $\left.a\right|_{uv} = \left.a\right|_u\left.\hspace{-0.1cm}\right|_v$
\item[(ZS7)] $\left.1_A\right|_u = 1_A$
\item[(ZS8)] $\left.ab\right|_u = \left.a\right|_{b\cdot u}\left.b\right|_u$
\end{enumerate}
\end{tabular}

\noindent for all $u, v\in U$ and $a, b\in A$. Then $U\times A$ becomes a semigroup with identity $(1_U, 1_A)$ when given the operation
\[
(u,a)(v,b) = (u(a\cdot v), \left.a\right|_vb).
\]
This is called the {\em Zappa-Sz\'ep product} of $U$ and $A$, and is denoted $U\bowtie A$. If in addition to the above, we have that
\begin{enumerate}\addtolength{\itemsep}{-0.5\baselineskip}
\item[(i)]$U$ and $A$ are both left cancellative,
\item[(ii)]$U$ is right LCM,
\item[(iii)]$\J(A)$ is totally ordered by inclusion, and
\item[(iv)]the map $u\mapsto a\cdot u$ is a bijection on $U$ for each $a\in A$,
\end{enumerate}
then $U\bowtie A$ is a right LCM semigroup as well, see \cite[Lemma 3.3]{BRRW14}. 

By Theorem \ref{maintheorem}, the boundary quotient $\Q(U\bowtie A)$ defined in \cite[Definition 5.1]{BRRW14} is isomorphic to the C*-algebra of an \'etale groupoid $\Gt(\s)$ (where $\s$ is as in \eqref{Sdef}) whose unit space is homeomorphic to the space of tight filters in $\J(U\bowtie A)$. To use Theorems \ref{Qsimple} and \ref{purelyinfinite} requires that we know the nature of the core of our semigroup, and in this case the core has an easily describable form. Firstly, by \cite[Lemma 5.3(a)]{BRRW14}, each element $(1_U,a)$ is in the core of $U\bowtie A$. Furthermore, by \cite[Remark 3.4]{BRRW14}, for $u, v\in U$ and $a, b\in A$, we have
\[
(u, a)U\bowtie A \cap (v, b)U\bowtie A = \EM \hspace{0.5cm}\Leftrightarrow\hspace{0.5cm} uU\cap vU = \EM.
\]
Therefore, $\{(u,a)\}$ is a one-point foundation set in $U\bowtie A$ if and only if $\{u\}$ is a foundation set in $U$. Hence the core of $U\bowtie A$ is
\[
(U\bowtie A)_0 = \{(u, a)\in U\bowtie A\mid u\in U_0\}.
\]
By Proposition \ref{coreprop} this is a subsemigroup of $U\bowtie A$, and in particular, we have that for all $u\in U_0$, $a\cdot u \in U_0$ for all $a\in A$. Thus we are justified writing $(U\bowtie A)_0 = U_0\bowtie A$. Without having more information about $U$ and $A$ we cannot say much more, though in the sequel we consider a specific example for which we can.

\subsection{Self-similar groups}

We close with an example which is a specific case of the situation from \S \ref{zappaszepsection}. The conclusions we come to in this section are known, and combine the results of \cite{EPSep14} and \cite{BRRW14}. Indeed, generalizing the results implicit in combining \cite{EP13} (which was a preliminary version of \cite{EPSep14}) and \cite{BRRW14} was a major inspiration for this work. We present what follows to illustrate our results in the context of this interesting example.

Let $X$ be a finite set, let $G$ be a group, and let $X^*$ be as in \S \ref{freesemigroupssection}. Suppose that we have a length-preserving action of $G$ on $X^*$, with $(g, \alpha)\mapsto g\cdot \alpha$, such that for all $g\in G$, $x\in X$ there exists a unique element of $G$, denoted $\left.g\right|_x$, such that for all $\alpha\in X^*$
\[
g(x\alpha) = (g\cdot x)(\left.g\right|_x\cdot \alpha).
\]
In this case, the pair $(G,X)$ is called a {\em self-similar group}. In \cite{Nek09}, Nekrashevych associates a C*-algebra to $(G, X)$, denoted $\mathcal{O}_{G, X}$, which is the universal C*-algebra generated by a set of isometries $\{s_x\}_{x\in X}$ and a unitary representation $\{u_g\}_{g\in G}$ satisfying
\begin{enumerate}\addtolength{\itemsep}{-0.5\baselineskip}
\item[(i)]$s_x^*s_y = 0$ if $x\neq y$, 
\item[(ii)]$\sum_{x\in X}s_xs_x^* = 1$,
\item[(iii)]$u_gs_x = s_{g\cdot x}u_{\left.g\right|_x}$.
\end{enumerate} 

If one defines, for $\alpha\in X^*$ and $g\in G$,
\[
\left.g\right|_\alpha := \left.g\right|_{\alpha_1}\hspace{-0.1cm}\left.\right|_{\alpha_2}\cdots \hspace{-0.1cm}\left.\right|_{\alpha_{|\alpha|}}
\]
then the free semigroup $X^*$ and the group $G$ (viewed as a semigroup) together with the maps $(g, \alpha)\mapsto g\cdot \alpha$ and $(g, \alpha)\mapsto \left.g\right|_\alpha$ satisfy the conditions (ZS1)--(ZS8), and so we may form the Zappa-Sz\'ep product $X^*\bowtie G$. Furthermore, conditions (i)--(iv) from \S 5.3 are easily seen to hold, so $X^*\bowtie G$ is a right LCM semigroup. The semilattice of principal right ideals of $X^*\bowtie G$ is isomorphic to that of $X^*$ via the map $(\alpha, g)X^*\bowtie G \mapsto \alpha X^*$, and so one may identify $\J(X^*\bowtie G)$ with $\J(X^*)$, with inclusion order given by
\[
\alpha X^*\subset \beta X^* \Leftrightarrow \beta\text{ is a prefix of }\alpha.
\]
as before, principal right ideals are either disjoint or are comparable by inclusion. Hence as before, the unit space of the tight groupoid is homeomorphic to $\Sigma_X$, which is homeomorphic to the Cantor set. By \cite[Theorem 6.7]{BRRW14}, $\Q(X^*\bowtie G)\cong \mathcal{O}_{G, X}$. 

In general, $X^*\bowtie G$ is not cancellative, although it is embeddable into a group if and only if it is cancellative \cite[Theorem 5.5]{LW13}. We recall the following concepts from \cite{EPSep14}. Let $\alpha \in X^*$, and $g\in G$. Then $\alpha$ is said to be {\em strongly fixed by} $g$ if $g\cdot \alpha = \alpha$ and $\left.g\right|_\alpha = 1_G$, and we let 
\[
SF_g = \{\alpha\in X^*\mid \alpha\text{ strongly fixed by g}\}.
\]
Of course, if $\alpha\in SF_g$, then $\alpha\gamma\in SF_g$ for every word $\gamma\in X^*$. We will say that a strongly fixed word $\alpha$ is {\em minimal} by $g$ if $\alpha\in SF_g$ and no prefix of $\alpha$ is strongly fixed by $g$, and will denote this set by 
\[
MSF_g = \{\alpha\in X^*\mid \alpha\text{ minimal strongly fixed}\}\subset SF_g.
\]
The self-similar group $(G, X)$ is said to be {\em pseudo-free} if $SF_g$ is empty for all $g\neq 1_G$. A short calculation shows that $X^*\bowtie G$ is cancellative if and only if $(G,X)$ is pseudo-free (see \cite[Proposition 3.11]{LW13} or \cite[Lemma 3.2]{ES14}).

As mentioned earlier, our condition (H) is slightly weaker than right cancellativity, so one hopes that we can give conditions on $(G,X)$ which are equivalent to (H). If $(\alpha, g), (\beta, h)\in X^*\bowtie G$ meet, then there exists $(\gamma, k)\in X^*\bowtie G$ such that
\begin{eqnarray*}
(\alpha, g)(\gamma, k) &=& (\beta, h)(\gamma, k)\\
(\alpha(g\cdot \gamma), \left.g\right|_\gamma k) &=& (\beta(h\cdot \gamma), \left.h\right|_\gamma k),
\end{eqnarray*}
and since the action of $G$ on $X^*$ fixes lengths, we must have that $\alpha = \beta$,  $g\cdot \gamma = h\cdot\gamma$ and $\left.g\right|_\gamma=\left.h\right|_\gamma$. After noticing that the definition of a self-similar group implies that $\left.k\right|^{-1}_v = \left.k^{-1}\right|_{k\cdot v}$ for all $k\in G, v\in X^*$, this is easily seen to imply that $\gamma$ is strongly fixed by $g^{-1}h$. Hence, $(X^*\bowtie G)_{(\alpha, g), (\alpha, h)}$ is only nonempty when $g^{-1}h$ has a strongly fixed word, and
\[
(X^*\bowtie G)_{(\alpha, g), (\alpha, h)} = \{ (\gamma, k)\mid k\in G, \gamma\in SF_{g^{-1}h}\} = (X^*\bowtie G)_{(\emptyset, g), (\emptyset, h)}.
\]
Thus, $X^*\bowtie G$ will satisfy condition (H) if we can show that for all $g\in G\setminus \{1_G\}$, there exists a finite set $F\subset SF_g$ such that for all $\alpha\in SF_g$ there exists $\beta \in F$ such that $\beta X^*\cap \alpha X^*\neq \EM$. The following result appears in \cite{EPSep14} gives conditions for when this occurs.
\begin{lem}\cite[Theorem 12.2]{EPSep14}\label{MSFH}
Let $(G, X)$ be a self-similar group. Then $X^*\bowtie G$ satisfies condition (H) if and only if, for all $g\in G\setminus\{1_G\}$, the set $MSF_g$ is finite. 
\end{lem}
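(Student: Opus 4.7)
The plan is to convert condition (H) for $X^*\bowtie G$ into a statement purely about strongly fixed words in $X^*$, and then reduce to a minimality argument on prefixes.

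First, I would exploit the computation already done just above the lemma statement: two elements $(\alpha,g)$ and $(\beta,h)$ of $X^*\bowtie G$ meet if and only if $\alpha=\beta$ and $SF_{g^{-1}h}\neq\emptyset$, in which case
\[
(X^*\bowtie G)_{(\alpha,g),(\alpha,h)} = \{(\gamma,k)\mid k\in G,\ \gamma\in SF_{g^{-1}h}\}.
\]
Moreover, since principal right ideals of $X^*\bowtie G$ are comparable exactly when their first coordinates are comparable as words, one has $(\gamma,k)P\cap(\beta,m)P\neq\EM$ iff $\gamma$ and $\beta$ are prefix-comparable. These two facts reduce (H), applied to the pair $(\emptyset,1_G)$ and $(\emptyset,g)$, to: \emph{there is a finite $F_0\subset SF_g$ such that every word in $SF_g$ is prefix-comparable to some element of $F_0$.}

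For the ``if'' direction, suppose $MSF_g$ is finite for every $g\neq 1_G$. Given meeting elements $(\alpha,g)$ and $(\alpha,h)$ with $e:=g^{-1}h\neq 1_G$, set $F:=\{(\gamma,1_G)\mid \gamma\in MSF_e\}$, which is finite. Each $\gamma\in MSF_e$ satisfies $e\cdot\gamma=\gamma$ and $e|_\gamma=1_G$; using the standard cocycle identity $(g^{-1}h)|_\gamma = (g|_\gamma)^{-1}h|_\gamma$ one gets $g|_\gamma=h|_\gamma$, so $(\alpha,g)(\gamma,1_G)=(\alpha,h)(\gamma,1_G)$. For any $b=(\beta,m)$ with $\beta\in SF_e$, the shortest prefix of $\beta$ lying in $SF_e$ is in $MSF_e$, hence provides the required element of $F$ with $fP\cap bP\neq\EM$. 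The case $g=h$ is trivial with $F=\{1_P\}$.

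For the ``only if'' direction, fix $g\neq 1_G$ with $SF_g\neq\emptyset$ (otherwise $MSF_g=\emptyset$) and apply (H) to $p=(\emptyset,1_G)$, $q=(\emptyset,g)$ to obtain a finite $F=\{(\gamma_i,k_i)\}_{i=1}^n$. The identity $pf=qf$ forces each $\gamma_i\in SF_g$. For each $i$, replace $\gamma_i$ by any shortest prefix $\gamma_i'$ of $\gamma_i$ that still lies in $SF_g$; by construction $\gamma_i'\in MSF_g$. Let $F':=\{\gamma_i'\}$. I would then argue $MSF_g\subset F'$: for any $\delta\in MSF_g\subset SF_g$ the covering property gives some $\gamma_i$ that is prefix-comparable to $\delta$, hence $\gamma_i'$ and $\delta$ are prefix-comparable (since $\gamma_i'\preceq\gamma_i$), and minimality of both $\gamma_i'$ and $\delta$ in $SF_g$ forces $\gamma_i'=\delta$. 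Thus $MSF_g$ is finite.

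The only mildly delicate step is verifying in the ``if'' direction that the cocycle condition $e|_\gamma = 1_G$ really produces $g|_\gamma = h|_\gamma$ from $e = g^{-1}h$; this is where one uses $\left.k^{-1}\right|_{k\cdot v} = (\left.k\right|_v)^{-1}$ together with (ZS8), and was already flagged in the discussion preceding the lemma, so no real obstacle is expected.
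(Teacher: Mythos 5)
Your proof is correct and follows essentially the same route as the paper: both use the reduction of condition (H) to the prefix-covering statement about $SF_g$ established just before the lemma, and both exploit that every strongly fixed word has a minimal strongly fixed prefix. The only cosmetic differences are that you verify $pf=qf$ explicitly via the cocycle identity (which the paper's displayed description of the meeting set already contains) and that your ``only if'' direction shows $MSF_g\subset F'$ directly rather than deriving a contradiction from a minimal word longer than everything in $F$.
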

\begin{proof}
One easily sees that if $MSF_g$ is finite, it will satisfy the above condition, as each strongly fixed word must have a prefix which is minimal. Conversely, if such a finite $F$ exists, and $MSF_g$ is infinite, find a $\gamma\in MSF_g$ such that $|\gamma| > \max_{\alpha\in F}|\alpha|$. Then there must exist $\alpha\in F$ such that $\alpha X^*\cap \gamma X^*\neq \EM$, and since $|\gamma|> |\alpha|$, $\alpha$ must be a prefix of $\gamma$. But $\alpha$ is strongly fixed, and $\gamma$ is supposed to be minimal, so we have a contradiction. Hence $MSF_g$ is finite.
\end{proof}

We now address condition (EP). In this example, the core of $X^*\bowtie G$ coincides with the group of units of $X^*\bowtie G$, which is
\[
(X^*\bowtie G)_0 = U(X^*\bowtie G) = \{ (\emptyset, g)\mid g\in G\}
\]
and can be identified with the group $G$. The inverse semigroup \eqref{Sdef} has been previously constructed in \cite{EPSep14}, and generalizing their results there was an inspiration for this work. Let
\[
\s_{G, X} = \{ (\alpha, g, \beta)\mid \alpha,\beta\in X^*, g\in G\}.
\]
This set becomes an inverse semigroup when given the operation
\[
(\alpha, g, \beta)(\gamma, h, \nu) = \begin{cases}(\alpha (g\cdot\gamma'), \left.g\right|_{\gamma'}h, \nu), &\text{if }\gamma = \beta\gamma',\\ (\alpha, g(\left.h^{-1}\right|_{\beta'})^{-1}, \nu (h^{-1}\cdot\beta')), & \text{if } \beta = \gamma\beta',\\ 0 &\text{otherwise}\end{cases}
\]
with
\[
(\alpha, g, \beta)^* = (\beta, g^{-1}, \alpha).
\]
Then the map from our $\s$ to $\s_{G, X}$ given by 
\[
[(\alpha, g), (\beta, h)]\mapsto (\alpha, gh^{-1}, \beta)
\]
is an isomorphism of inverse semigroups, so from now on we will use this identification to discuss condition (EP). We note at this point that it follows from \cite[Corollary 6.4]{EPSep14}, $\Ct(\s_{G, X})\cong \mathcal{O}_{G, X}$, and so in this case our Theorem \ref{maintheorem} is already known. 

An element of $\s_{G, X}$ is an idempotent if and only if it is of the form $(\alpha, 1_G, \alpha)$. Identifying the core with $G$, we see that an idempotent $(\alpha, 1_G, \alpha)$ is weakly fixed by $g\in G$ if and only if, for all $\gamma\in X^*$, $(g\cdot \alpha)\gamma X^*\cap \alpha\gamma X^* \neq \EM$. By length considerations, this is equivalent to saying that $g\cdot \alpha = \alpha$ and for all $\gamma\in X^*$, $\left.g\right|_\alpha \cdot \gamma = \gamma$. If the action of $G$ on $X^*$ is {\em faithful} (which is to say that for all $g\in G\setminus \{1_G\}$ there exists $\alpha\in X^*$ such that $g\cdot \alpha \neq \alpha$), then this is equivalent to saying that $\alpha$ is strongly fixed by $g$. So, in the presence of faithfulness,
\[
(\alpha, 1_G, \alpha)\text{ weakly fixed by }(\emptyset, g, \emptyset) \Leftrightarrow \alpha\text{ strongly fixed by }g \Leftrightarrow (\alpha, 1_G,\alpha)\text{ fixed by }(\emptyset, g, \emptyset).
\]
Hence we have the following.
\begin{lem}\label{ssgEP}
Let $(G, X)$ be a faithful self-similar group, and let $g\in G$. Then $(\emptyset, g,\emptyset)$ satisfies condition (EP). 
\end{lem}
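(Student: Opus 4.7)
The plan is to leverage the fact that under the isomorphism $\s \cong \s_{G,X}$, the element $(\emptyset, g, \emptyset)$ corresponds to $[p,q]$ with $p = (\emptyset, g)$ and $q = (\emptyset, 1_G) = 1_{X^*\bowtie G}$. Hence $[p,q]^*[p,q] = 1_\s$ and every idempotent of $\s$ lies below it. Since units of $X^*\bowtie G$ are precisely $\{(\emptyset, h) : h \in G\}$, every non-zero idempotent in $\s$ takes the form $[(\alpha, 1_G), (\alpha, 1_G)]$, which under the isomorphism becomes $(\alpha, 1_G, \alpha)$.

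Next I would apply the characterization displayed just before the lemma: under the faithfulness hypothesis, $(\alpha, 1_G, \alpha)$ is weakly fixed by $(\emptyset, g, \emptyset)$ if and only if $\alpha$ is strongly fixed by $g$, if and only if it is actually fixed. So every weakly fixed idempotent below $[p,q]^*[p,q]$ is of the form $[k,k] = [qk, qk]$ with $k = (\alpha, 1_G)$ and $\alpha$ strongly fixed by $g$.

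Finally, for any such $k$, a direct computation using the Zappa-Sz\'ep multiplication gives
\[
pk = (\emptyset, g)(\alpha, 1_G) = (g\cdot\alpha,\, \left.g\right|_\alpha) = (\alpha, 1_G) = qk,
\]
where the third equality uses $g\cdot\alpha = \alpha$ and $\left.g\right|_\alpha = 1_G$ (the defining properties of $\alpha$ being strongly fixed by $g$). Consequently $pkf = qkf$ for every $f \in X^*\bowtie G$, and so the singleton $F = \{1_{X^*\bowtie G}\}$ — which is trivially a foundation set — verifies condition (EP). There is no serious obstacle here: once faithfulness collapses weak fixing to actual fixing, condition (EP) holds for the trivial reason noted after Lemma \ref{EPalgebraic}.
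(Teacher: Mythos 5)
Your argument is correct and is essentially the paper's own: the paper derives the lemma directly from the displayed equivalence (faithfulness collapses ``weakly fixed by $(\emptyset,g,\emptyset)$'' to ``strongly fixed by $g$,'' hence to ``fixed''), after which the trivial foundation set $\{1_{X^*\bowtie G}\}$ verifies (EP), exactly as you compute via $pk=qk$. No gaps.
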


\indent We now come to the following result on self-similar groups. We note that it is not original to this work, and also follows from \cite[Proposition 5.5]{Nek09} and alternatively \cite[Proposition 17.1]{EPSep14}.

\begin{theo}\label{ssgclassification}
Let $(G, X)$ be a faithful self-similar group, suppose $G$ is amenable, and suppose that for all $g\in G\setminus\{1_G\}$, $MSF_g$ is finite. Then $\mathcal{O}_{G, X} \cong \Q(X^*\bowtie G)$ is nuclear, simple, and purely infinite. 
\end{theo}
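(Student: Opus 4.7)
The plan is to verify the hypotheses of Theorem \ref{Qsimple} and Theorem \ref{purelyinfinite} for $P = X^*\bowtie G$, and to obtain nuclearity via an amenability argument for the tight groupoid $\Gt(\s)$. By the discussion preceding the theorem, we already know that $\Q(X^*\bowtie G)\cong \mathcal{O}_{G,X}$, so it suffices to prove these three properties for $\Q(X^*\bowtie G)$.

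First, I would verify condition (H) for $P = X^*\bowtie G$: by Lemma \ref{MSFH}, the hypothesis that $MSF_g$ is finite for all nontrivial $g$ gives this at once. Next, I would identify the core: as noted earlier in the section, $(X^*\bowtie G)_0 = U(X^*\bowtie G)$ consists of elements $(\emptyset, g)$ with $g\in G$, and under the isomorphism with $\s_{G,X}$ these are exactly the elements of the form $(\emptyset, g, \emptyset)$. Lemma \ref{ssgEP} says that each such element satisfies condition (EP) because $(G,X)$ is faithful, so hypothesis (2) of Theorem \ref{Qsimple} is satisfied.

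The remaining ingredient for Theorem \ref{Qsimple} is condition (1), namely $\Q(P)\cong C^*_r(\Gt(\s))$. I would argue this by showing that $\Gt(\s)$ is amenable and invoking \cite{AR00}. Concretely, the tight groupoid $\Gt(\s)$ fits into the framework where one has a cocycle onto $G$ whose kernel is an AF (or approximately proper) subgroupoid modeled on the Cantor set $\Sigma_X$; since $G$ is amenable and the kernel is amenable, the extension is amenable. (Alternatively one can cite the corresponding amenability result for Nekrashevych's $\mathcal{O}_{G,X}$ when $G$ is amenable, which is well established in the self-similar group literature and yields nuclearity and $C^*=C^*_r$ for the associated groupoid.) This simultaneously gives nuclearity of $\Q(X^*\bowtie G)$.

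Having verified all hypotheses of Theorem \ref{Qsimple}, we conclude that $\Q(X^*\bowtie G)$ is simple. To apply Theorem \ref{purelyinfinite}, we need only observe that $\Gt(\s)$ is not the trivial groupoid: the unit space is homeomorphic to $\Sigma_X$, which is the Cantor set (and in particular has more than one point), so the theorem applies and gives pure infiniteness. The main obstacle I anticipate is justifying the amenability of $\Gt(\s)$ rigorously from amenability of $G$; everything else reduces to citations of the lemmas already proved in the section.
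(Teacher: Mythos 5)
Your overall skeleton is the same as the paper's: condition (H) from Lemma \ref{MSFH}, condition (EP) for the core elements $(\emptyset,g,\emptyset)$ from Lemma \ref{ssgEP} (using faithfulness), simplicity from Theorem \ref{Qsimple}, and pure infiniteness from Theorem \ref{purelyinfinite} once the groupoid is seen to be nontrivial (both you and the paper implicitly use $|X|>1$ here, which is fine). The one place where your argument diverges is the step you yourself flag as the obstacle, and there your primary mechanism does not work as stated.

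There is no continuous cocycle from $\Gt(\s)$ onto $G$ of the kind you describe: in $\s_{G,X}$ one has $(\alpha,g,\beta)(\beta\gamma,1_G,\beta\gamma) = (\alpha(g\cdot\gamma), \left.g\right|_\gamma, \beta\gamma)$, so the same germ is represented by triples whose $G$-coordinates differ by restriction, and $[(\alpha,g,\beta),\xi]\mapsto g$ is not well defined. The natural cocycle on this groupoid is the $\ZZ$-valued length cocycle $(\alpha,g,\beta)\mapsto |\alpha|-|\beta|$, and its kernel is not AF in general -- it encodes the $G$-action at every level (for the odometer it is a Bunce--Deddens-type groupoid with nontrivial $K_1$), so its amenability is exactly as hard as the problem you are trying to solve and itself needs amenability of $G$. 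Your fallback -- citing the established nuclearity/amenability result for $\mathcal{O}_{G,X}$ with $G$ amenable -- is precisely what the paper does, but note the route it takes: \cite[Corollary 10.16]{EPSep14} gives nuclearity of $\Q(X^*\bowtie G)\cong C^*(\Gt(\s))$ (the universal algebra); since nuclearity passes to quotients, $C^*_r(\Gt(\s))$ is nuclear; and then, because condition (H) makes $\Gt(\s)$ Hausdorff (and it is \'etale and second countable), \cite[Theorem 5.6.18]{BO08} upgrades nuclearity of the reduced algebra to amenability of $\Gt(\s)$, whence $C^*(\Gt(\s))\cong C^*_r(\Gt(\s))$, which is hypothesis (1) of Theorem \ref{Qsimple}. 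If you replace your cocycle sketch with this chain of citations, your proof is complete and coincides with the paper's.
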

\begin{proof}
Let $\s$ be as in \eqref{Sdef} for the semigroup $X^*\bowtie G$. Because $MSF_g$ is finite for all $g\in G\setminus \{1_G\}$, $X^*\bowtie G$ satisfies condition (H) by Lemma \ref{MSFH}. Because $G$ is amenable, we may apply \cite[Corollary 10.16]{EPSep14} to get that $\Q(X^*\bowtie G)$ is nuclear. Since nuclearity passes to quotients, this implies that $C^*_r(\Gt(S))$ is nuclear. Thus, by \cite[Theorem 5.6.18]{BO08}, $\Gt(\s)$ is amenable and so $C^*_r(\Gt(S))\cong C^*(\Gt(\s)) \cong \Q(X^*\bowtie G)$. By Lemma \ref{ssgEP}, every element of $\s_0$ satisfies (EP). Hence we may use Theorem \ref{Qsimple} to conclude that $\Q(X^*\bowtie G)$ is simple. Since we are assuming that $|X|>1$, $X^*\bowtie G\neq (X^*\bowtie G)_0$ implying that $\Gt(\s)$ is not the trivial groupoid, and so by Theorem \ref{purelyinfinite} we have that $\Q(X^*\bowtie G)$ is purely infinite.
\end{proof}

\begin{ex}{\bf The Odometer and Modified Odometer}

We will give two examples of faithful self-similar groups. For the first, let $X = \{0, 1\}$, let $\ZZ = \left\langle z\right\rangle$ be the group of integers with identity $e$ written multiplicatively. The {\em 2-odometer} is the self-similar group $(\ZZ, X)$ determined by
\[
z\cdot 0 = 1\hspace{1cm} \left.z\right|_0 = e
\]
\[
z\cdot 1 = 0\hspace{1cm} \left.z\right|_1 = z.
\]
If one views a word $\alpha\in X^*$ as a binary number (written backwards), then $z\cdot \alpha$ is the same as 1 added to the binary number for $\alpha$, truncated to the length of $\alpha$ if needed. If such truncation is not needed, $\left.z\right|_\alpha = e$, but if truncation is needed, $\left.z\right|_\alpha = z$. This self-similar group is faithful and pseudo-free \cite[Example 3.4]{ES14}. Hence $(\ZZ, X)$ satisfies the hypotheses of Theorem \ref{ssgclassification}, and so $\Q(X^*\bowtie \ZZ)$ is nuclear, simple, and purely infinite. In fact, this C*-algebra was shown in \cite[Example 6.5]{BRRW14} to be isomorphic to the C*-algebra $\Q_2$ defined in \cite{LL12}, and there the authors prove directly that it is nuclear, simple, and purely infinite. In \cite[Example 4.5]{ES14} we showed that this C*-algebra is isomorphic to a partial crossed product of the continuous functions on the Cantor set by the Baumslag-Solitar group $BS(1,2)$.

Since pseudo-freeness is stronger than what is needed to imply condition (H), we give a modified version of this example which is not pseudo-free but whose Zappa-Sz\'ep product does satisfy condition (H). To this end, let $X_B = \{0,1,B\}$, and let $\ZZ$ be written multiplicatively as before. Define
\[
z\cdot 0 = 1\hspace{1cm} \left.z\right|_0 = e
\]
\[
z\cdot 1 = 0\hspace{1cm} \left.z\right|_1 = z.
\]
\[
z\cdot B = B\hspace{1cm} \left.z\right|_B = e.
\]
One notices that the first two lines above are the same as in the previous example, but we have added a new symbol $B$ which is fixed by every group element; in fact, the word $B$ is strongly fixed by each group element. If one is wondering why we are calling this symbol ``$B$'', one could think of it as a {\bf B}rick wall past which no group element can travel or, if one pictures the odometer as acting like a car odometer, one could think of it as a {\bf B}roken digit. In any case, our new self-similar group $(X_B,\ZZ)$ is not pseudo-free.

If $\alpha\in MSF_{z^m}$ with $m>0$, then one quickly sees that $\alpha = \beta B$ for some $\beta\in \{0,1\}^*$, such that $z^m\cdot \beta = \beta$. Due to the description of the action as adding in binary, one sees that for a given $\beta\in \{0,1\}^*$, this equation is satisfied if and only if $k2^{|\beta|} = m$ for some $k>0$. Hence for a fixed $m$, only words $\beta$ of length less than $\log_2(m)$ could possibly satisfy $z^m\cdot \beta = \beta$. There are only finitely many such words, so for all $m>0$ the set $MSF_{z^m}$ is finite. The case $m<0$ is similar. Hence $X^*_B\bowtie \ZZ$ satisfies condtion (H). Direct application of Theorem \ref{ssgclassification} gives that $\Q(X^*_B\bowtie \ZZ)$ is nuclear, simple, and purely infinite.
\end{ex}

\appendix
\section{Appendix: The core of an inverse semigroup and topological freeness}\label{coreappendix}

In this brief appendix we establish a slightly more general form for Proposition \ref{EPcore}, beginning with a more general definition of the core of an inverse semigroup.

\begin{prop}\label{coreISG}
Let $S$ be an inverse semigroup with zero. Then the set
\[
S_0 := \{ s\in S\mid s^*se\neq 0\text{ and } ss^*e \neq 0\text{ for all }e\in E(S)\setminus\{0\}\}
\]
is closed under multiplication and the inverse operation, and so is an inverse subsemigroup of $S$.
\end{prop}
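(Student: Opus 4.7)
The plan is to first recast the membership condition for $S_0$ in a symmetric form involving the conjugation maps $e \mapsto ses^*$ and $e \mapsto s^*es$. Once this is done, closure under the involution becomes tautological, and closure under multiplication reduces to a single two-line computation.

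The key preliminary observation is that for any $s \in S$ and any $e \in E(S)$, the element $ses^*$ lies in $E(S)$ (using $se s^*\cdot ses^* = s(s^*s)e\cdot es^* = ses^*$, where idempotents commute), and
\[
ses^* = 0 \iff s^*se = 0.
\]
One direction: multiplying $s^*se = 0$ on the left by $s$ and on the right by $s^*$ and using $ss^*s = s$ gives $ses^* = 0$. Conversely, if $ses^* = 0$, then $s^*ses^*s = 0$, and since $s^*s$ is an idempotent commuting with $e$, the left-hand side equals $e(s^*s)^2 = es^*s = s^*se$. Applying this equivalence to $s^*$ in place of $s$ shows $s^*es = 0 \iff ss^*e = 0$. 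Putting both equivalences together, membership in $S_0$ is characterised by:
\[
s \in S_0 \iff \text{for every nonzero } e \in E(S),\ ses^* \neq 0 \text{ and } s^*es \neq 0.
\]

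In this reformulated form, closure under the inverse operation is immediate, since the condition is symmetric in $s$ and $s^*$. For closure under multiplication, take $s, t \in S_0$ and a nonzero idempotent $e \in E(S)$. I would simply write
\[
(st)e(st)^* = s(tet^*)s^* \qquad \text{and} \qquad (st)^*e(st) = t^*(s^*es)t.
\]
Applying the reformulated condition for $t$ to $e$ shows $tet^*$ is a nonzero idempotent; then applying the condition for $s$ to this nonzero idempotent gives $s(tet^*)s^* \neq 0$. Symmetrically, $s^*es$ is a nonzero idempotent by $s \in S_0$, and then $t^*(s^*es)t \neq 0$ by $t \in S_0$. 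This shows $st \in S_0$, with $st \neq 0$ as an automatic consequence (since $(st)^*(st)e \neq 0$ implies $(st)^*(st) \neq 0$).

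The only technical point is the initial equivalence $ses^* = 0 \iff s^*se = 0$, but this is a routine manipulation using the defining identities $ss^*s = s$ and the commutativity of $E(S)$; I do not foresee any real obstacle.
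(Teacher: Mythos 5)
Your proof is correct. It reaches the same conclusion by a somewhat different organization than the paper: the paper fixes $s,t\in S_0$ and a nonzero $e\in E(S)$ with $(st)^*ste=0$ and runs a direct chain of implications ($t^*s^*ste=0\Rightarrow t^*s^*stet^*=0\Rightarrow t^*tet^*s^*s=0\Rightarrow\cdots\Rightarrow e=0$), invoking commutativity of idempotents and the defining property of $s$ and then of $t$ along the way, declaring the case $st(st)^*e=0$ ``similar'' and closure under the involution obvious. You instead isolate the equivalence $ses^*=0\iff s^*se=0$ as a preliminary lemma and use it to recast membership in $S_0$ as the statement that conjugation by $s$ and by $s^*$ carries nonzero idempotents to nonzero idempotents; with that reformulation, closure under the involution is tautological and closure under multiplication reduces to the factorizations $(st)e(st)^*=s(tet^*)s^*$ and $(st)^*e(st)=t^*(s^*es)t$, i.e., to composing conjugations. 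The computational ingredients are exactly the ones the paper uses implicitly (idempotency of $ses^*$, commuting idempotents, $x=0\iff x^*x=0$), so nothing genuinely new is needed; what your packaging buys is symmetry between the two halves of the $S_0$ condition, no appeal to a ``similarly'' for the second case, and a structural explanation of why the product works, whereas the paper's version is shorter on the page but leaves the reader to supply where precisely $s\in S_0$ and $t\in S_0$ enter the chain.
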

\begin{proof}
Suppose that $s, t\in S_0$, $e\in E(S)$, and $(st)^*ste = 0$. Then
\begin{eqnarray*}
t^*s^*ste &=& 0\\
\Rightarrow t^*s^*stet^* &=&0\\
\Rightarrow t^*tet^*s^*s &=& 0\\
\Rightarrow t^*tet^* &=& 0\\
\Rightarrow et^*t &=&0\\
\Rightarrow e &=& 0
\end{eqnarray*}
Similarly, if instead $st(st)^*e =0$ then one can show that this implies that $e=0$. Hence $st\in S_0$. Also, if $s\in S_0$ then $s^*$ clearly also is.
\end{proof}
\begin{defn}
Let $S$ be an inverse semigroup with zero. Then we call the inverse subsemigroup $S_0$ from Proposition \ref{coreISG} the {\em core} of $S$. 
\end{defn}

We are now able to prove a result similar to Proposition \ref{EPcore}, but only after making an additional hypothesis. In what follows, we consider the standard action $(\{D_e\}_{e\in E(S)}, \{\theta_s\}_{s\in S})$ of $S$ on its tight spectrum $\Et(S)$, $F_s$ denotes the fixed points for the element $s\in S$, and $TF_s$ denotes the trivially fixed points for $s$.

\begin{prop}
Let $S$ be an inverse semigroup with zero, and let $S_0$ be its core. Suppose further that for all $e\in E(S)\setminus\{0\}$, there exists $s\in S$ such that $ss^* = e$ and $s^*s = 1_S$, and that $\Gt(S)$ is Hausdorff. Then $\Gt(S)$ is essentially principal if and only if for all $s\in S_0$, every interior fixed point of $s$ is trivially fixed.
\end{prop}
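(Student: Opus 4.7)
The forward direction is trivial because $S_0 \subset S$. For the converse, suppose the hypothesis on $S_0$ holds but that $\Gt(S)$ is not essentially principal. By \cite[Theorem 4.7]{EP14} there exists $s\in S$ with $\mathring{F}_s \not\subset TF_s$; since $\Gt(S)$ is Hausdorff, \cite[Theorem 3.15]{EP14} implies $TF_s$ is closed in $D_{s^*s}$, so $U := \mathring{F}_s \setminus TF_s$ is a nonempty open set. I would fix an ultrafilter $\xi \in U$, using that ultrafilters are dense in $\Et(S)$.

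The central claim is that there exist a nonzero idempotent $e \in \xi$ and an element $u \in S$ with $uu^* = e$ and $u^*u = 1_S$ such that $u^*su \in S_0$. Granting this, $\theta_{u^*}$ restricts to a homeomorphism $D_e \to \Et(S)$ carrying $\xi$ to an ultrafilter $\eta := \theta_{u^*}(\xi)$. The image $\theta_{u^*}(U \cap D_e)$ is an open neighborhood of $\eta$ consisting of fixed points of $u^*su$, so $\eta$ is an interior fixed point of $u^*su$; the hypothesis then yields a nonzero idempotent $g \in \eta$ with $g \leq u^*su$. Working in $\Gt(S)$ itself, I would verify the groupoid-theoretic identity
\[
[s,\xi] \;=\; [u,\eta]\cdot[u^*su,\eta]\cdot[u^*,\xi]
\]
by direct computation with the composition law. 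Since $[u^*su,\eta]$ is the unit at $\eta$ (being equal to $[g,\eta]$ for $g\in\eta$ with $g\leq u^*su$), the right-hand side collapses to $[u,\eta]\cdot[u^*,\xi] = [uu^*,\xi] = [e,\xi]$, the unit at $\xi$. Unpacking the germ equivalence $[s,\xi]=[e,\xi]$ into an explicit idempotent witness below $s$ (take $h = e\cdot e'$, where $e' \in \xi$ realizes $se' = ee'$, and check $sh = h$) places $\xi \in TF_s$, contradicting $\xi\in U$.

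To prove the central claim I would argue by contradiction: assume that for every $e\in\xi$ and every $u$ with $uu^*=e, u^*u=1_S$, the element $u^*su$ lies outside $S_0$. Then there is a nonzero idempotent $f_e\in E(S)$ with either $(u^*su)^*(u^*su)f_e=0$ or $(u^*su)(u^*su)^*f_e=0$. The idempotent $h_e := uf_eu^*$ is nonzero (since $u^*h_eu=f_e\neq 0$) and satisfies $h_e\leq e$, so I could pick an ultrafilter $\eta_e$ containing $h_e$. In the first case, $\theta_{u^*}(\eta_e)$ contains $f_e$ and hence cannot contain $(u^*su)^*(u^*su)$, so $\theta_{u^*}(\eta_e) \notin D_{(u^*su)^*(u^*su)}$; if $s$ were to fix $\eta_e$, then the composition $\theta_{u^*}\theta_s\theta_u$ would be defined at $\theta_{u^*}(\eta_e)$ and would necessarily agree there with $\theta_{u^*su}$ by the action axiom, a contradiction. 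The second case is symmetric after exchanging $s$ and $s^*$. Thus no $\eta_e$ is fixed by $s$.

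To close the argument, I would verify that the net $\{\eta_e\}$, directed by $e \preceq e'$ iff $e'\leq e$ in the semilattice order on $\xi$, converges to $\xi$. Given a basic neighborhood $U(X,Y)$ of $\xi$, ultrafilter maximality supplies for each $y\in Y$ some $z_y \in\xi$ with $z_yy=0$; choosing $e \in \xi$ below $\prod_{x\in X} x \cdot \prod_{y\in Y} z_y$ ensures that $\eta_e$ contains every $x \in X$ and excludes every $y\in Y$, so $\eta_e \in U(X,Y)$. Consequently $\eta_e$ lies eventually in $U\subset \mathring{F}_s$, contradicting that no $\eta_e$ is fixed by $s$. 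The main obstacle will be the careful tracking of domains in the composition $\theta_{u^*}\theta_s\theta_u$ (since $\theta_u$ is only a partial homeomorphism) and the verification of the germ identity above; both are generalizations of bookkeeping already carried out, in the more concrete context of $\J(P)$, in the proof of Proposition \ref{EPcore}.
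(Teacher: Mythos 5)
Your proposal is correct and follows essentially the same route as the paper's own proof: reduce to the claim that some $u$ with $uu^*\in\xi$, $u^*u=1_S$ conjugates $s$ into $S_0$, and otherwise build, for each $e\in\xi$, an ultrafilter containing $uf_eu^*\le e$ that $s$ cannot fix, so that the resulting net converges to $\xi$ and contradicts $\xi$ being an interior fixed point. Your germ computation $[s,\xi]=[u,\eta][u^*su,\eta][u^*,\xi]$ is just a repackaging of the paper's direct manipulation $b^*sbe=e\Rightarrow 0\neq beb^*\leqslant s$, and your explicit checks (that $\theta_{u^*}(\xi)$ is an \emph{interior} fixed point, and the $U(X,Y)$-convergence of the net rather than citing the neighborhood-base fact) only fill in details the paper leaves implicit.
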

\begin{proof}
The ``only if'' direction follows from \cite[Theorem 4.7]{EP14}. To prove the other direction, suppose that for all $s\in S_0$, $\mathring{F}_s\subset TF_s$, and suppose that $\Gt(S)$ is not essentially principal. Hence there must exist $s\in S$ such that $\mathring{F}_s\setminus TF_s$ is nonempty. Because we are assuming that $\Gt(S)$ is Hausdorff, $TF_s$ must be closed in $D_{s^*s}$, and so $\mathring{F}_s\setminus TF_s$ is open. Find an open set $U\subset \mathring{F}_s\setminus TF_s$, and find an ultrafilter $\xi\in U$. Then $\theta_s(\xi) = \xi$. We claim that we will be done if we can find $b\in S$ such that $bb^*\in \xi$, and $b^*sb\in S_0$. If such a $b$ exists, then the point $\theta_{b^*}(\xi)$ would be fixed by $b^*sb$, and so by assumption this point must be trivially fixed. Hence there is an idempotent $e\leqslant b^*sb$ with $\theta_{b^*}(\xi)\in D_e$. We would then have
\begin{eqnarray*}
b^*sbe &=&e\\
\Rightarrow bb^*sbeb^* &=& beb^*\\
\Rightarrow 0 \neq beb^* &\leqslant & bb^*s \leqslant s.
\end{eqnarray*}
Furthermore, since $\theta_{b^*}(\xi)\in D_e$, we must have that $\xi=\theta_b(\theta_{b^*}(\xi))\in D_{beb^*}$, and so $\xi$ is trivially fixed, which would be a contradiction. Hence, finding such a $b$ would prove our result.

So, we suppose that no such $b$ exists, fix $e\in \xi$. By assumption, we can find $b\in S$ such that $bb^* = e$ and $b^*b = 1_S$, and we are supposing that $b^*sb \notin S_0$. Hence there must exist a nonzero idempotent $f\in E(S)$ such that one of $b^*sb(b^*sb)^*f = 0$ or $(b^*sb)^*b^*sbf = 0$ holds. Suppose for a moment that the first holds. Then 
\begin{equation}\label{orthogonalcontradiction}
b^*sbb^*s^*bf = 0 \Rightarrow  bb^*sbb^*s^*bfb^* = 0 \Rightarrow (sbb^*s^*)(bfb^*) = 0. 
\end{equation}
Since $b^*b = 1_S$, $bfb^*$ cannot be 0, so find an ultrafilter $\eta_{bb^*}$ containing $bfb^*$. Since $bfb^*\leqslant bb^*$, we must have $bb^*\in \eta$. However by \eqref{orthogonalcontradiction}, $sbb^*s^*$ cannot be in $\eta$. So even if $\theta_{s}(\eta)$ is defined, it cannot be equal to $\eta$, because then it would have to contain $bfb^*$ and $sbb^*s^*$. The case where the second equation holds instead is similar. Hence for each $e = bb^*\in \xi$, we can construct an ultrafilter $\eta_e$ which contains $e$ but which is not fixed by $\theta_s$. Furthermore, if $e\in \xi$ then for all $k\leqslant e$, upwards closure of $\eta_k$ implies that $e\in \eta_k$, that is to say $\eta_k\in D_e$. Since the $D_e$ form a neighborhood base for ultrafilters \cite[Proposition 2.5]{EP14}, the $\{\eta_e\}_{e\in \xi}$ are a net converging to $\xi$, no elements of which are fixed by $\theta_s$. Since $\xi$ was assumed to be an interior fixed point, this is a contradiction. Hence we can find such a $b$, and we are done.   
\end{proof}

\bibliography{C:/Users/Charles/Dropbox/Research/bibtex}{}

\begin{thebibliography}{BRRW14}

\bibitem[AD97]{AD97}
Claire Anantharaman-Delaroche.
\newblock Purely infinite {C}*-algebras arising from dynamical systems.
\newblock {\em Bulletin de la Soci{\'e}t{\'e} Math{\'e}matique de France},
  125(2):199--226, 1997.

\bibitem[ADR00]{AR00}
Claire Anantharaman-Delaroche and Jean Renault.
\newblock {\em Amenable Groupoids}.
\newblock Monographie de l'Enseignement math{\'e}matique. L'Enseignement
  Math{\'e}matique, 2000.

\bibitem[BCFS14]{BCFS14}
Jonathan Brown, Lisa~Orloff Clark, Cynthia Farthing, and Aidan Sims.
\newblock Simplicity of algebras associated to \'etale groupoids.
\newblock {\em Semigroup Forum}, 88(2):433--452, 2014.

\bibitem[BLS14]{BL14}
N.~{Brownlowe}, N.~S. {Larsen}, and N.~{Stammeier}.
\newblock {On C*-algebras associated to right LCM semigroups}.
\newblock {\em arXiv:1406.5725}, June 2014.

\bibitem[BO08]{BO08}
N.P. Brown and N.~Ozawa.
\newblock {\em C*-algebras and Finite-dimensional Approximations}.
\newblock Graduate studies in mathematics. American Mathematical Soc., 2008.

\bibitem[BRRW14]{BRRW14}
Nathan Brownlowe, Jacqui Ramagge, David Robertson, and Michael~F. Whittaker.
\newblock Zappa--{S}z\'ep products of semigroups and their {C}*-algebras.
\newblock {\em Journal of Functional Analysis}, 266(6):3937 -- 3967, 2014.

\bibitem[CL07]{CL07}
John Crisp and Marcelo Laca.
\newblock Boundary quotients and ideals of {T}oeplitz {C*}-algebras of {A}rtin
  groups.
\newblock {\em Journal of Functional Analysis}, 242(1):127 -- 156, 2007.

\bibitem[Cun08]{Cu08}
Joachim Cuntz.
\newblock {$C^*$}-algebras associated with the $ax+b$-semigroup over
  $\mathbb{N}$.
\newblock In {\em K-theory and Noncommutative Geometry}, pages 201--215.
  European Math. Soc., Valladolid, 2008.

\bibitem[Dav96]{Dav}
Kenneth~R. Davidson.
\newblock {\em {$C^*$}-algebras by example}, volume~6 of {\em Fields Institute
  Monographs}.
\newblock American Mathematical Society, Providence, RI, 1996.

\bibitem[EGS12]{EGS12}
Ruy Exel, Daniel Gon\c{c}alves, and Charles Starling.
\newblock The tiling {C}*-algebra viewed as a tight inverse semigroup algebra.
\newblock {\em Semigroup Forum}, 84:229--240, 2012.

\bibitem[EP13]{EP13}
Ruy {Exel} and Enrique {Pardo}.
\newblock {Graphs, groups and self-similarity}.
\newblock {\em arXiv:1307.1120}, July 2013.

\bibitem[EP14a]{EPSep14}
R.~{Exel} and E.~{Pardo}.
\newblock {Self-similar graphs, a unified treatment of Katsura and Nekrashevych
  C*-algebras}.
\newblock {\em arXiv:1409.1107}, September 2014.

\bibitem[EP14b]{EP14}
Ruy {Exel} and Enrique {Pardo}.
\newblock {The tight groupoid of an inverse semigroup}.
\newblock {\em arXiv:1408.5278}, August 2014.

\bibitem[ES14]{ES14}
Ruy {Exel} and Charles {Starling}.
\newblock {Self-Similar Graph {C}*-Algebras and Partial Crossed Products}.
\newblock {\em arXiv:1406.1086}, June 2014.

\bibitem[Exe08]{Ex08}
Ruy Exel.
\newblock Inverse semigroups and combinatorial {$C^\ast$}-algebras.
\newblock {\em Bull. Braz. Math. Soc. (N.S.)}, 39(2):191--313, 2008.

\bibitem[Law99]{La99}
Mark~V. Lawson.
\newblock The structure of 0-{E}-unitary inverse semigroups {I}: the monoid
  case.
\newblock {\em Proceedings of the Edinburgh Mathematical Society (Series 2)},
  42:497--520, 1999.

\bibitem[Li12]{Li12}
Xin Li.
\newblock Semigroup {C}*-algebras and amenability of semigroups.
\newblock {\em Journal of Functional Analysis}, 262:4302 -- 4340, 2012.

\bibitem[Li13]{Li13}
Xin Li.
\newblock Nuclearity of semigroup {C}*-algebras and the connection to
  amenability.
\newblock {\em Advances in Mathematics}, 244(0):626 -- 662, 2013.

\bibitem[LL12]{LL12}
Nadia~S. Larsen and Xin Li.
\newblock The 2-adic ring {$C^\ast$}-algebra of the integers and its
  representations.
\newblock {\em J. Funct. Anal.}, 262(4):1392--1426, 2012.

\bibitem[LW13]{LW13}
Mark~V. {Lawson} and Alistair~R. {Wallis}.
\newblock {A correspondence between a class of monoids and self-similar group
  actions II}.
\newblock {\em arXiv:1308.2802}, 2013.

\bibitem[MS11]{SM11}
D.~{Milan} and B.~{Steinberg}.
\newblock {On inverse semigroup $C^*$-algebras and crossed products}.
\newblock {\em arXiv:1104.2304}, 2011.

\bibitem[Nek09]{Nek09}
Volodymyr Nekrashevych.
\newblock C*-algebras and self-similar groups.
\newblock {\em J. reine angew. Math}, 630:59--123, 2009.

\bibitem[Nic92]{Ni92}
A.~Nica.
\newblock C*-algebras generated by isometries and {W}iener-{H}opf operators.
\newblock {\em J. Operator Theory}, 27:17--52, 1992.

\bibitem[Nor14]{No14}
Magnus~Dahler Norling.
\newblock Inverse semigroup {C}*-algebras associated with left cancellative
  semigroups.
\newblock {\em Proceedings of the Edinburgh Mathematical Society (Series 2)},
  57:533--564, 6 2014.

\bibitem[Pat99]{Pa99}
Alan Paterson.
\newblock {\em Groupoids, inverse semigroups, and their operator algebras}.
\newblock Birkh\"{a}user, 1999.

\bibitem[Ren80]{R80}
Jean Renault.
\newblock {\em A groupoid approach to {$C^{\ast} $}-algebras}, volume 793 of
  {\em Lecture Notes in Mathematics}.
\newblock Springer, Berlin, 1980.

\bibitem[{Ste}14]{Ste14}
Benjamin {Steinberg}.
\newblock {Simplicity, primitivity and semiprimitivity of etale groupoid
  algebras with applications to inverse semigroup algebras}.
\newblock {\em arXiv:1408.6014}, August 2014.

\end{thebibliography}
\bibliographystyle{alpha}

{\small 
\textsc{Departamento de Matem\'atica, Campus Universit\'ario Trindade
CEP 88.040-900 Florian\'opolis SC, Brasil.}

Charles Starling: \texttt{slearch@gmail.com}}
\end{document}